\newcommand{\numberset}[1]{\ensuremath{\mathbb{#1}}}    
\newcommand{\C}{\numberset{C}}  
\newcommand{\R}{\numberset{R}}  
\newcommand{\Z}{\numberset{Z}}  
\newcommand{\Q}{\numberset{Q}}  
\newcommand{\PP}{\numberset{P}}  
\theoremstyle{definition}
\newtheorem{thm}{Theorem}[section]
\newtheorem{thm_int}{Theorem}
\newtheorem{prop}[thm]{Proposition}
\newtheorem{lem}[thm]{Lemma}
\newtheorem{cor}[thm]{Corollary}
\newtheorem{defi}[thm]{Definition}
\DeclareMathOperator{\id}{Id}
 \DeclareMathOperator{\Sl}{SL}
\DeclareMathOperator{\spn}{span}
\newcommand{\mycomments}[1]{
           \ifthenelse{\boolean{mynotes}}
                      {#1}{}
           }
\begin{document}

\title{On real Calabi-Yau threefolds twisted by a section}
\author{Diego Matessi}

\begin{abstract}
We study the mod $2$ cohomology of real Calabi-Yau threefolds given by real structures which preserve the torus fibrations constructed by Gross. We extend the results of Casta\~no-Bernard-Matessi and Arguz-Prince to the case of real structures twisted by a Lagrangian section. In particular we find exact sequences linking the cohomology of the real Calabi-Yau with the cohomology of the complex one. Applying SYZ mirror symmetry, we show that the connecting homomorphism is determined by a ``twisted squaring of divisors'' in the mirror Calabi-Yau, i.e.  by  $D \mapsto D^2 + DL$ where $D$ is a divisor in the mirror and $L$ is the divisor mirror to the twisting section. We use this to find an example of a connected $(M-2)$-real quintic threefold. 
\end{abstract}

\maketitle


\section{Introduction}

A real structure on a complex manifold $X$ is an anti-holomorphic involution $\iota: X \rightarrow X$,  as for example conjugation on an algebraic variety $X \subset \mathbb{CP}^n$ defined over $\R$. The real part of $X$ is the fixed point set of $\iota$, which we denote by $\Sigma$. Understanding the topology of $\Sigma$ is a notoriously difficult problem, see for instance Wilson's classical survey on Hilbert's sixteenth problem \cite{wilson:hilb}. A remarkable method to construct real hypersurfaces in toric varieties with controlled topology is Viro's patchworking technique \cite{viro, viro_patch, iten_mik_shu:trop}, which laid the foundations for the field of tropical geometry. Indeed patchworking allowed the construction of many interesting examples and counterexamples, especially in the case of curves and surfaces (\cite{iten:ragsdale, iten:t_surf}).  

One of the questions one may ask is about the relationship between the topologies of $\Sigma$ and $X$. For instance, a famous result is the Smith-Thom inequality relating the $\Z_2$ Betti numbers of $\Sigma$ and $X$:  
\[ \sum b_j(\Sigma, \Z_2) \leq \sum b_j(X, \Z_2). \]
When the equality is satisfied, then $\Sigma$ is said to be maximal, or an $M$-real hypersurface. We say it is of type $(M-k)$ if the difference between the two sums of Betti numbers is $2k$. There are many examples of maximal hypersurfaces in the case of curves and surfaces, but little is known in higher dimensions \cite{iten:t_surf}. Another problem is to find bounds on individual Betti numbers. For instance, a sharp bound on individual Betti numbers of real surfaces in $\mathbb{RP}^3$ is unknown in high degrees \cite{itenberg:trophom_betti}. 

One may investigate the same questions for real hypersurfaces constructed via patchworking. In this context, Itenberg \cite{itenberg:trophom_betti} conjectured that if $\Sigma$ is a hypersurface in $\mathbb{RP}^{n+1}$ constructed by primitive patchworking then 
 \[ b_q(\Sigma , \Z_2) \leq \begin{cases} h^{q,q}( X)  \ \text{if} \ q = n/2, \\ 
                                                          h^{q, n-q}( X) + 1 \ \text{otherwise}. 
                                                          \end{cases}. \]
This conjecture has been recently proved by Renaudineau and Shaw  \cite{ren_shaw:bound}, who actually proved a more general version for real hypersurfaces in toric varieties (see inequalities \eqref{bounds:bh}). In this paper we investigate similar questions but for real structures arising in a different context. 

\subsection{Lagrangian fibrations with real structures} Our goal is to generalize the results of Casta\~no-Bernard and Matessi \cite{CBM:fixpnt} and Arguz and Prince \cite{arg_princ:realCY} on the cohomology of real Calabi-Yau threefolds constructed via Lagrangian torus fibrations.   In this context $(X, \omega)$ is a $2n$-dimensional symplectic manifold, with symplectic form $\omega$, together with a Lagrangian torus fibration $f: X \rightarrow B$ onto a real $n$-dimensional manifold $B$. A compatible real structure is an anti-symplectic involution $\iota$, i.e. $\iota^*\omega = - \omega$, which preserves the fibres of the torus fibration. The real variety $\Sigma$ is the fixed point set of $\iota$. Let $\pi: \Sigma \rightarrow B$ be the restriction of $f$ to $\Sigma$. The general idea in  \cite{CBM:fixpnt, arg_princ:realCY}  and in this paper is to relate the cohomology with $\Z_2$ coefficients of $X$ and $\Sigma$ by comparing the Leray spectral sequences associated to $f$ and $\pi$.  

The torus fibrations which we consider in this paper are those constructed topologically by Gross in \cite{TMS} starting from the data of a three dimensional affine manifold with singularities $B$. It follows from \cite{Gross_Batirev} that one can construct these fibrations over affine manifolds with singularities associated to Calabi-Yau hypersurfaces or complete intersections in toric Fano varieties. It is expected, although not yet proved, that $X$ is homeomorphic to the corresponding Calabi-Yau. In the case of the quintic threefold in $\PP^4$, this has been proved by Gross in \cite{TMS} .  The main feature of Gross' fibrations is that they are built to naturally incorporate Strominger-Yau-Zaslow (SYZ) mirror symmetry at a  topological level. In fact there is a standard procedure to dualize the torus fibrations to obtain the mirror Calabi-Yau $\check X$ together with a fibration $\check f: \check X \rightarrow B$.  In \cite{CB-M} it was shown that Gross' fibrations could be made into Lagrangian fibrations with respect to a symplectic form extending the natural one existing on the union of smooth fibres.  These fibrations also come with a Lagrangian zero section $\sigma_0: B \rightarrow M$. 

A family of fibre preserving real structures on $X$ was constructed in \cite{CBMS}. We have the ``standard real structure'' which fixes the zero section. Denote the corresponding real Calabi-Yau by $\Sigma$. Then $\Sigma$ has at least two connected components,  one of them being the zero section, isomorphic to $B$. Given a Lagrangian section $\tau: B \rightarrow X$, one can ``twist" the standard real structure to get another real structure $\iota_{\tau}$. If $\tau$ is not the square of another section, then $\iota_{\tau}$ does not fix a section and therefore it is not standard. Let us denote by $\Sigma_{\tau}$ the corresponding real Calabi-Yau. The results in \cite{CBM:fixpnt} and \cite{arg_princ:realCY} concern the topology of the standard real Calabi-Yau. In this paper we generalize to the twisted case. 

\subsection{The Leray spectral sequence and mirror symmetry} The Leray spectral sequence of a Gross fibration  was investigated in \cite{splagI, splagII, TMS}.  Given some group of coefficients $G$, we have the sheaves $R^pf_{\ast}G$ on $B$, whose stalk at a point $b$ is the cohomology of the fibre $F_b$, i.e.  $H^p(F_b, G)$.  The second page of the Leray spectral sequence is given by $E^{q,p}_2 = H^q(B, R^pf_{\ast}G)$.  Mirror symmetry between $X$ and $\check  X$ implies the following isomorphism 
\[ H^q(B, R^pf_{\ast}G ) \cong H^q(B, R^{n-p}\check f_{\ast}G ). \]
Gross shows that for various choices of $G$ (e.g. $G = \Q$, $\Z$ or $\Z_p$) and with some assumptions on $B$, $X$ and $\check X$, the spectral sequence degenerates at the $E_2$ page. In this case, the cohomology of $X$ can be read off from the $E_2$ page. In particular the Hodge numbers of $X$ satisfy
\[ h^{p,q} (X) = \dim  H^q(B, R^pf_{\ast} \Q ).\]
This equality holds in higher dimensions and it has been proved in more generality in \cite{G-Siebert2007}. Notice that together with the above mirror symmetry isomorphism, this implies the famous relationship between the Hodge numbers of mirror Calabi-Yau manifolds $h^{p,q}(X) = h^{n-p, q}( \check X)$.

\subsection{Main results} Let $B_0$ be the set of regular values of $f$, so that for every $b \in B_0$, the fibre $F_b = f^{-1}(b)$ is a smooth $n$-dimensional torus. By the Arnold-Liouville theorem, $F_b$ is of the type $V/ \Lambda^*$ where $V$ is an affine space modeled on $T^{\ast}_bB_0$  and $\Lambda^* \cong \Z^n$ is an $n$-dimensional lattice in  $T^{\ast}_bB_0$. It follows that a compatible real structure $\iota$ on $X$, restricted to the fibre $F_b$, acts as reflection with respect to some point on $V$. In particular $\pi^{-1}(b) = \Sigma \cap F_b$ consists of $2^n$ points which have the structure of an $n$-dimensional affine space defined over $\Z_2$. In the case of Gross' fibrations, $\pi^{-1}(b)$ is finite for all $b \in B$. In particular, the Leray spectral sequence of $\pi$ is trivial: the cohomology of $\Sigma$ satisfies 
\[ H^{q}(\Sigma, \Z_2) \cong H^{q}(B, \pi_{\ast} \Z_2). \]

Our results considers the case when $X$ is a Calabi-Yau threefold, i.e. $n=3$. The first result is the following. 

\begin{thm_int} 
Let $\tau$ be a Lagrangian section of $f: X \rightarrow B$ and $\iota_{\tau}$ the associated real structure. There exist sheaves $\mathcal L^{1}_{\tau}$ and $\mathcal L^{2}_{\tau}$ over $B$ and a short exact sequence 
\[
            0 \longrightarrow \mathcal L^{1}_{\tau}  \longrightarrow \pi_{\tau_\ast} \Z_2 \longrightarrow \mathcal L^{2}_{\tau} \longrightarrow 0,
\]
such that $\mathcal L^{1}_{\tau}$ and $\mathcal L^{2}_{\tau}$ are related to the topology of $X$ by the following short exact sequences
\[
            0 \longrightarrow \Z_2  \longrightarrow \mathcal L^1_{\tau} \longrightarrow  R^1 f_{\ast} \Z_2 \longrightarrow 0,
\]
\[
            0 \longrightarrow R^2 f_{\ast} \Z_2  \longrightarrow \mathcal L^2_{\tau} \longrightarrow \Z_2 \longrightarrow 0.
\]
\end{thm_int} 

Notice that at a regular value $b \in B_0$, we have 
\[ (\pi_{\tau_\ast} \Z_2)_{b} = \operatorname{Maps}(\pi^{-1}(b), \Z_2). \] The sheaf $\mathcal L^1_{\tau}$ in $b$ coincides with the affine maps. This also explains the second sequence, which is the usual splitting of affine functions as the sum of a constant function and a linear function.  In the case of the standard real structure, the first sequence coincides with the one found in \cite{CBM:fixpnt}. Indeed in this case $\pi^{-1}(b)$ is naturally a vector space, since the zero section defines an origin. This implies that the second and third sequence are both split.  

The first sequence gives a long exact sequence in cohomology  which computes the $\Z_2$ cohomology  of $\Sigma_{\tau}$. In particular we have the connecting homomorphism 
\[ \beta: H^{1}(B,  \mathcal L^{2}_{\tau} )  \rightarrow H^{2}(B,  \mathcal L^{1}_{\tau} ). \]
By composing $\beta$ with the morphisms from the second and third sequence (see diagram \eqref{beta}) we get the homomorphism
\[ \beta':  H^{1}(B, R^2  f_{\ast} \Z_2) \rightarrow H^{2}(B, R^1 f_{\ast} \Z_2). \]
It was shown in  \cite{CBM:fixpnt} that in the case $B$ is a homology $\Z_2$-sphere and $H^{1}(X, \Z_2) = H^{1}(\check X, \Z_2) =0$, then the untwisted $\Sigma$ has exactly two connected components. Under the same hypothesis, we prove here that in the twisted case, $\Sigma_{\tau}$ is connected. In both cases the cohomology of $\Sigma_{\tau}$ is uniquely determined by $\beta'$.  As a corollary of this construction we also get that if in addition the integral cohomologies of $X$ and $\check X$ have no torsion, then the Betti numbers of $\Sigma_{\tau}$ satisfy the same bounds as those proved by Renaudineau-Shaw (inequalities \eqref{bounds:bh}). Indeed, in the twisted case the bound is stronger: a twisted $\Sigma_{\tau}$ can be at most of type $(M-2)$ and this happens if and only if $\beta'$ is the zero map. 

If we apply the mirror symmetry isomorphism we can view $\beta'$ as a map $\beta': H^{1}(B, R^1 \check f_{\ast} \Z_2) \rightarrow H^{2}(B, R^2 \check f_{\ast} \Z_2)$ on the cohomology of the mirror $\check X$. We now have that a Lagrangian section $\tau$ can be naturally viewed as a class $\tau \in  H^{1}(B, R^2  f_{\ast} \Z_2)$. If we apply mirror symmetry to $\tau$, we get an element $L_{\tau} \in  H^{1}(B, R^1 \check f_{\ast} \Z_2)$. Notice that the latter group can be interpreted as a $\Z_2$ version of the Picard group of $\check X$ and in particular $L_{\tau}$ can be viewed as the line bundle mirror to the section $\tau$. 

We can now state the main theorem of this paper.

\begin{thm_int}
The map $\beta': H^{1}(B, R^1 \check f_{\ast} \Z_2) \rightarrow H^{2}(B, R^2 \check f_{\ast} \Z_2)$ coincides with the map 
\[ \begin{split} 
            S_{\tau} : H^{1}(B, R^1 \check f_{\ast} \Z_2) & \longrightarrow H^{2}(B, R^2 \check f_{\ast} \Z_2) \\
                                                                                            D  & \longmapsto D^2 +  DL_{\tau}
   \end{split}
            \]
\end{thm_int}

This theorem generalizes the main result in \cite{arg_princ:realCY},  which proves the untwisted case, i.e. when $L_{\tau} = 0$. 

As an application we find a connected $(M-2)$-real quintic. We use the torus fibration on a quintic in $\PP^4$ constructed by Gross in \cite{TMS}. Then, on the mirror quintic $\check X$, we have that $H^{1}(B, R^1 \check f_{\ast} \Z_2) \cong H^2(\check X, \Z_2)$, which coincides with the Picard group$\mod 2$. To find an $(M-2)$-real quintic it is then enough to find an $L \in  H^2(\check X, \Z_2)$ such that $D^2 + DL = 0$ for all $D \in H^2(\check X, \Z_2)$. The real quintic will then be $\Sigma_{\tau}$, such that $L = L_{\tau}$. We find such an $L$ by using the explicit description of the triple intersection form on $\check X$ given in \cite{TMS}.  Arguz and Prince have computed the Betti numbers of the untwisted real quintic $\Sigma$, obtaining $b_1(\Sigma) = 29$. In particular $\Sigma$ is far from being maximal, therefore none of the real quintics constructed with this method is maximal. 

\subsection{Structure of the paper} In Section \ref{lagFib} we review the necessary background on Gross' construction of torus fibrations, topological mirror symmetry and the Leray spectral sequence. In Section \ref{real_str} we recall the setup in \cite{CBMS} where the standard and twisted real structures are defined and we discuss the results in  \cite{CBM:fixpnt} and \cite{arg_princ:realCY} for the standard real structure.   In Section \ref{seseq} we prove Theorem 1 (i.e. Theorem \ref{sheaf_seq}).  In Section \ref{cnnctng} we prove Theorem 2 (i.e. Theorem \ref{beta_MS}).  In Section \ref{applications} we prove some consequences, such as connectedness and bounds on the Betti numbers. In Section \ref{rs:sq} we discuss the relationship between our short exact sequences and the spectral sequence constructed Renaudineau and Shaw. In Section \ref{m-2:quintic} we explain the construction of the connected $(M-2)$ real quintic. 

\subsection*{Acknowledgments} 
I wish to thank Arthur Renaudineau for explaining me most of Section \ref{rs:sq}, Mark Gross for a useful discussion, H\"ulya Arguz and Thomas Prince for explaining me their work. I was partially supported by the national research project ``Moduli and Lie theory'' (PRIN 2017) and by a travel grant from the INDAM research group GNSAGA.

\section{Lagrangian fibrations and mirror symmetry} \label{lagFib}

We explain the construction of Lagrangian torus fibrations starting from the data of an integral affine manifold with singularities. The topological construction was done by Gross \cite{TMS} for the $3$-fold case and an extension to all dimensions was announced by Ruddat and Zharkov \cite{rud_zha:torus_fib}. It was shown in \cite{CB-M} that a variant of Gross' topological fibrations are indeed Lagrangian with respect to a symplectic form on $X$, induced by the integral affine structure on $B$. 

\begin{defi} An integral affine manifold with singularities is a triple $(B, \Delta, \mathcal A)$ where $B$ is an $n$-dimensional topological manifold; $\Delta \subset B$ a closed, codimension $2$ subset and $\mathcal A$ a maximal atlas on $B_0 = B - \Delta$ whose change of coordinate maps are in $\R^n \rtimes \Sl (\Z,n)$. The set $\Delta$ is called the discriminant locus. 
\end{defi}

Given $B_0 = B - \Delta$, we denote by $j: B_0 \rightarrow B$ the inclusion.  The cotangent bundle $T^*B_0$ carries the standard symplectic form, moreover we have the lattice 
\[ \Lambda^* = \spn_{\Z} \langle dx_1, \ldots, dx_n \rangle, \]
where $(x_1, \ldots, x_n)$ are local integral affine coordinates. This defines the symplectic manifold 
\[ X_0 = T^*B_0 / \Lambda^* \] 
together with the Lagrangian torus fibration $f_0: X_0 \rightarrow B_0$ given by the standard projections. A (partial) compactification of $X_0$ is given by a $2n$-dimensional manifold $X$ together with map $f: X \rightarrow B$ and a commutative diagram
\begin{equation} \label{compact}   \begin{tikzcd}
  X_0 \arrow[r] \arrow[d, "f_0"]
    & X \arrow[d, "f"] \\
  B_0 \arrow[r, "j"]
&  B 
\end{tikzcd}
\end{equation}
where the top arrow is a homeomorphism onto its image. In dimension $n=3$, Gross shows that under certain hypothesis on the set $\Delta$ and the affine structure around it, such a compactification can be carried out topologically in a canonical way.  In the same dimension and with the same hypothesis, Casta\~no-Bernard and Matessi \cite{CB-M} prove that, after a small thickening of $\Delta$, $X$ has a symplectic structure such that the inclusion $X_0 \rightarrow X$ is a symplectomorphism and $f$ is a Lagrangian fibration . 

The hypothesis on $\Delta$ and on the affine structure is that they are locally isomorphic to certain prescribed local models. When these hypothesis are satisfied we will say that $B$ is {\it simple}. We describe below the local models in dimension $2$ and $3$. In higher dimensions there is a longer list. The models are characterized by two key properties of the monodromy of $\Lambda^*$ around $\Delta$ which are called simplicity and positivity, see \cite{G-Siebert2003} for further details. 

\subsection{Dimension two: focus-focus points.} In dimension two we ask for $\Delta$ to be a discrete set of points with an affine structure around them locally isomorphic to the one depicted in Figure \ref{focus_focus}. Such singular points are called focus-focus points. In Figure \ref{focus_focus} the two simplices on the left are glued to the simplices on the right via integral affine transformations. The (red) cross is the point in $\Delta$.

\begin{figure}[!ht] 
\begin{center}
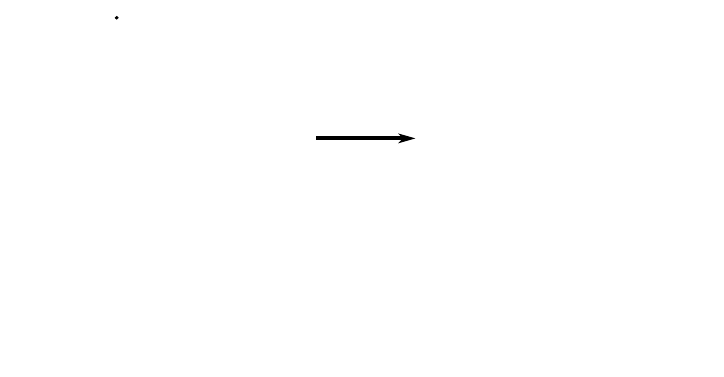
\caption{Charts around a focus-focus point}
\label{focus_focus}
\end{center}
\end{figure}

\subsection{The affine quartic} This is a global compact example. Take the simplex $P$ in $\R^3$ whose vertices are 
\[ \begin{split}
   & p_0 = (-1, -1, -1),  \quad p_1= (3, -1,-1),  \\ 
   & p_2=(-1,3,-1),    \quad p_3=(-1,-1, 3) 
   \end{split} 
 \]
We take $B = \partial P$. Each edge of $P$ contains five integral points and is subdivided by these into four segments. Define $\Delta$ to be the set of midpoints of these segments. In total $\Delta$ consists of $24$ points. We can define charts on $B$ as follows. We have four charts consisting of the interior of each $2$-face of $P$ together with their natural integral affine structure. For every integral point $q$ on some edge consider a neighborhood $U_q \subset B- \Delta$. Define a chart by the projection $U_q \rightarrow \R^3/\R q$, where $\R q$ is the line generated by $q$. By choosing these neighborhoods so that they cover $B_0$, we obtain an integral affine structure on $B_0$. It is not hard to show that all $24$ points are focus-focus.  This example is called the affine quartic because it is the affine structure associated to a toric degeneration of a quartic in $\PP^3$. 

\subsection{Dimension three: positive and negative vertices} In dimension three, $\Delta$ must be a trivalent graph. We have three local models. One for a generic point along an edge of $\Delta$ and two local models for vertices, which can be either of positive or negative type. 

The affine structure along an edge of $\Delta$ has the following description. Take the focus-focus $2$-dimensional model, denote it by $(B_{ff}, p)$, where $p$ is the focus-focus singular point. Then along the interior of an edge of $\Delta$ we want the affine structure to be locally isomorphic to $B_{ff} \times \R$, where now $\Delta = \{p\} \times \R$.

 A {\it negative vertex} is depicted in Figure \ref{negative}. Here $B$ is the union of two standard simplices and $\Delta$ is the trivalent graph (with just one vertex) depicted in red inside the common face. Figure \ref{negative} depicts the affine structure on $B_0$. It has three charts, one for each vertex of the common face. In the figure the shaded regions are not part of the charts. 

\begin{figure}[!ht] 
\begin{center}
\begingroup%
  \makeatletter%
  \providecommand\color[2][]{%
    \errmessage{(Inkscape) Color is used for the text in Inkscape, but the package 'color.sty' is not loaded}%
    \renewcommand\color[2][]{}%
  }%
  \providecommand\transparent[1]{%
    \errmessage{(Inkscape) Transparency is used (non-zero) for the text in Inkscape, but the package 'transparent.sty' is not loaded}%
    \renewcommand\transparent[1]{}%
  }%
  \providecommand\rotatebox[2]{#2}%
  \newcommand*\fsize{\dimexpr\f@size pt\relax}%
  \newcommand*\lineheight[1]{\fontsize{\fsize}{#1\fsize}\selectfont}%
  \ifx\svgwidth\undefined%
    \setlength{\unitlength}{177.37744904bp}%
    \ifx\svgscale\undefined%
      \relax%
    \else%
      \setlength{\unitlength}{\unitlength * \real{\svgscale}}%
    \fi%
  \else%
    \setlength{\unitlength}{\svgwidth}%
  \fi%
  \global\let\svgwidth\undefined%
  \global\let\svgscale\undefined%
  \makeatother%
  \begin{picture}(1,0.91659668)%
    \lineheight{1}%
    \setlength\tabcolsep{0pt}%
    \put(0,0){\includegraphics[width=\unitlength,page=1]{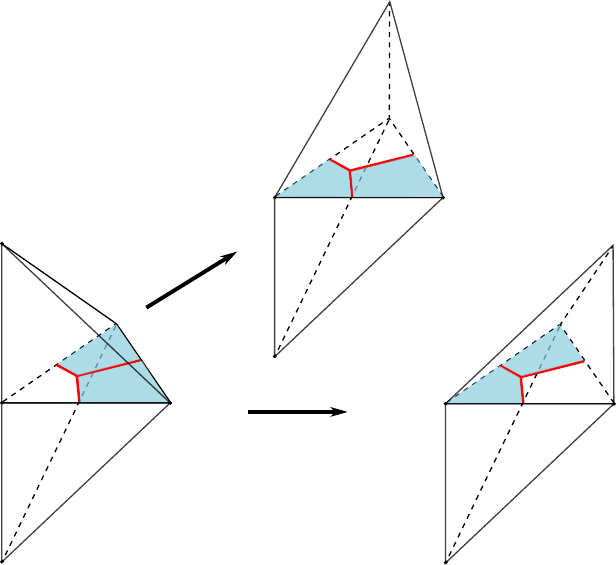}}%
    \put(0.2370831,0.51061475){\color[rgb]{0,0,0}\makebox(0,0)[lt]{\lineheight{1.25}\smash{\begin{tabular}[t]{l}$\Phi_1$\end{tabular}}}}%
    \put(0.44592307,0.17731459){\color[rgb]{0,0,0}\makebox(0,0)[lt]{\lineheight{1.25}\smash{\begin{tabular}[t]{l}$\Phi_2$\end{tabular}}}}%
  \end{picture}%
\endgroup%

\caption{Charts near a negative vertex}
\label{negative}
\end{center}
\end{figure}

The map $\Phi_1$ is the identity on the bottom simplex and on the top simplex it is  the linear map given by the matrix 
\[ \begin{pmatrix}
          1 & 0 & 0 \\
           0 & 1 & 1 \\
           0 & 0 & 1
    \end{pmatrix}. \] 
The map $\Phi_2$ is the identity on the bottom simplex and on the top simplex it is  the linear map given by the matrix 
\[ \begin{pmatrix}
          1 & 0 & 1 \\
           0 & 1 & 0 \\
           0 & 0 & 1
    \end{pmatrix}.  \] 

For the {\it positive vertex}, we let $B = \R \times \R^2 $ and take $\Delta$ inside $ \{0\} \times \R^2$ given by the set
\[ \{ y=0 , \ x \geq 0 \} \cup \{ x=0, \ y \geq 0 \} \cup \{ x=y, \ x \leq 0 \}. \]
Define the sets 
\[ R^+ = \R_{\geq 0} \times \Delta, \quad  R^- = \R_{\leq 0} \times \Delta \]
The affine structure on $B_0$ has two charts. The open sets are
\[ U_1 = (\R \times \R^2) - R^-, \quad U_2 =  (\R \times \R^2) - R^+.\]
On $\R^2$ define the piecewise linear function 
\[ \nu(x,y) = \min \{0,x,y \} \]
Define the coordinate map $\phi_1$ on $U_1$ to be the identity and the coordinate map $\phi_2$ on $U_2$ to be
\[ \phi_2(t,x,y) = (t + \nu(x,y), x,y). \]

\subsection{The affine quintic} \label{aff_quint}
This example is similar to the affine quartic, but one dimension higher. Take the simplex $P$ in $\R^4$ with vertices
\[ \begin{split}
    p_0 =  (-1, -1, & -1, -1),  \quad p_1= (4,  -1, -1,  -1 ),   \quad p_2=(-1,4,-1, -1), \\ 
    & p_3=(-1,-1, 4, -1), \quad  p_4 = (-1, -1,-1, 4).
   \end{split} 
 \]
 \begin{figure}[!ht] 
\begin{center}
\includegraphics{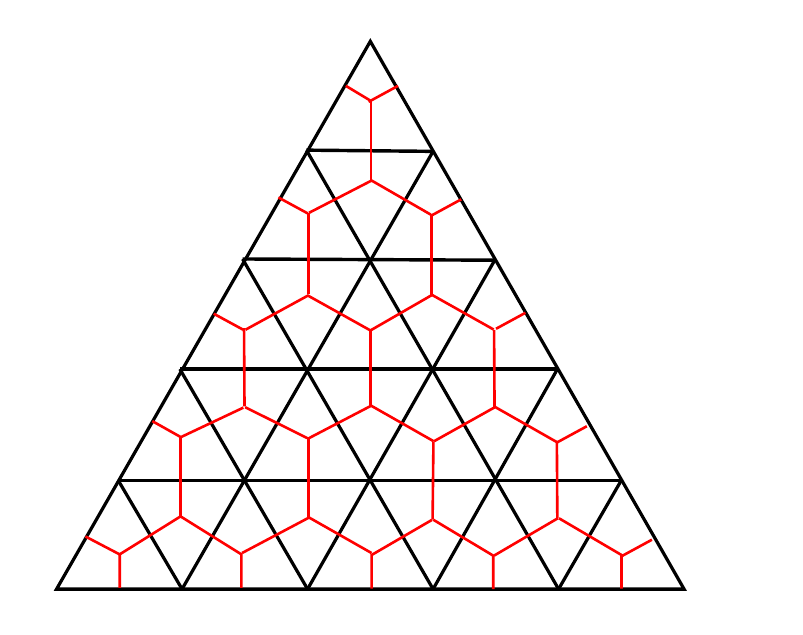}
\caption{Discriminant of an affine quintic}
\label{aff_quintic}
\end{center}
\end{figure}
 
Let $B = \partial P$. Inside every $2$-face of $P$, consider the honeycomb (red) graph depicted in Figure \ref{aff_quintic}. Define $\Delta$ to be union of such graphs over all $2$-faces of $P$.  The interior of each $2$-face contains $25$ trivalent vertices. There are also $5$ trivalent vertices in the interior of each edge. These are the points where the honeycomb graph intersects an edge, indeed each edge is contained in exactly three $2$-faces. We can define charts on $B$ as follows. We have obvious charts consisting of the interior of each $3$-face of $P$. For every integral point $q$ in a $2$-face, consider a neighborhood $U_q \subset B- \Delta$. Define a chart by the projection $U_q \rightarrow \R^4/\R q$, where $\R q$ is the line generated by $q$. By choosing these neighborhoods so that they cover $B_0$, we obtain an integral affine structure. It can be shown that vertices in the interior of $2$-faces are of negative type and vertices in the interior of edges are of positive type.  This example is called the affine quintic because it is the affine structure associated to a toric degeneration of a quintic in $\PP^4$. More examples of similar affine manifolds with singularities associated to toric degenerations of Calabi-Yau complete intersections in Fano toric varieties are constructed in \cite{Gross_Batirev}.

\subsection{Singular fibres}  The compactification in diagram \eqref{compact} is obtained by gluing suitable singular fibres over $\Delta$. For instance, in dimension $2$, the singular fibre over a focus-focus point is a once pinched torus.  In dimension three the singular fibre over a point in the interior of an edge of $\Delta$ is $F \times S^1$ where $F$ is a once pinched torus. The fibre over a positive vertex is obtained by considering a three torus $T^2 \times S^1$, where $T^2$ is a two torus, and collapsing a two torus $T^2 \times \{p\}$ to a point. The singular fibre over a negative vertex is more complicated, we refer to \cite{TMS} or \cite{CB-M} for the Lagrangian models.  In the case of the affine quartic the compactified manifold $X$ is homeomorphic to a K3 surface, i.e. to a quartic and the affine quintic is homeomorphic to a quintic Calabi-Yau, as proved by Gross in \cite{TMS}. It is expected that when $X$ is constructed from affine manifolds with singularities associated to toric degenerations of Calabi-Yau complete intersections in Fano toric varieties as in \cite{Gross_Batirev}, then it is homeomorphic to the given Calabi-Yau.

\subsection{Topological mirror symmetry}
In \cite{TMS} Gross constructs the topological mirror $\check X$ of $X$. Given the lattice $\Lambda \subset TB_0$, dual to $\Lambda^*$, we can form 
\[ \check X_0 = TB_0 / \Lambda \]
together with projection $\check f_0: \check X_0 \rightarrow B_0$. Gross proved that when $B$ is simple, also $\check X_0$ can be compactified to a manifold $\check X$ with a map $\check f: \check X \rightarrow B$ extending $\check f_0$. Indeed, in dimension $3$, the positive fibres in $X$ must be replaced by negative fibres in $\check X$ and viceversa. 

Since the tangent bundle does not have a natural symplectic structure, to construct a Lagrangian fibration on $\check X$ one needs the additional data of a potential $\phi$. This is a multivalued strictly convex function which can be used to define a symplectic form on $TB_0$ or, equivalently, to define a mirror affine structure on $B_0$ via a Legendre transform. For the purpose of this paper it will be enough to consider the mirror $\check X$ as the topological manifold obtained from $TB_0$. 

\subsection{The Leray spectral sequence} \label{Leray}
The cohomology of $X$ can be computed by the Leray spectral sequence associated to the map $f: X \rightarrow B$. Recall that given a group $G$ we denote by $R^pf_*G$ the sheaf on $B$ associated to the presheaf $U \mapsto H^p(f^{-1}(U), G)$. The fibration is called {\it $G$-simple} if
\[ j_{\ast}R^pf_{0_\ast}G = R^pf_{\ast}G \]
This essentially means that the cohomology of the singular fibres is determined by the local monodromy of $\Lambda^*\otimes G$. Gross proves that the fibrations constructed above (i.e. from a simple $B$) are $G$-simple for $G = \Z, \Q$ and $\Z_n$. 

The $E_2$ page is given by the cohomology groups $H^q(B,  R^pf_{\ast}G)$. Since the fibres are connected, we have that 
\begin{equation}  \label{R0}
      R^0f_{\ast}G \cong G.
\end{equation}
Let us now consider $G = \Z$. The fact that transition maps of the affine structure are in $\R^n \rtimes \Sl (\Z,n)$ implies that the fibres are oriented, in particular  
\begin{equation}  \label{Rn}
      R^nf_{\ast} \Z \cong \Z. 
\end{equation}
This is equivalent  to the fact that $B_0$ has a global integral volume form.  If $b \in B_0$, we have that 
\begin{equation} \label{Rlam}
 (R^p f_{\ast} \Z)_b = \bigwedge^p \Lambda_b. 
\end{equation}
On the other hand, if we consider the Leray spectral sequence for the mirror we have 
\begin{equation} \label{Rcheck}
   (R^p \check f_{\ast} \Z)_b = \bigwedge^p \Lambda^*_b.
\end{equation}
By contraction with the global volume form (or equivalently by Poincar\'e duality on the fibres) we have the natural isomorphism
\[  R^p f_{0_\ast} \Z  \cong R^{n-p} \check f_{0_\ast} \Z. \]
This extends to an isomorphism 
\begin{equation}  \label{Riso}  
              R^p f_{\ast} \Z  \cong R^{n-p} \check f_{\ast} \Z 
\end{equation}
by $\Z$-simplicity. 

We now consider $n=3$ and $G =\Q$. With the additional assumptions that $B$ is a $\Q$-homology sphere and that $b_1(X) = b_1(\check X) = 0$ it can be shown that the $E_2$ page for $X$ looks as follows
\[
\begin{array}{cccc}
\Q &           0                         &                 0                  & \Q \\
0   & H^1(B, R^2f_{\ast} \Q) & H^2(B, R^2f_{\ast} \Q) & 0  \\
0   & H^1(B, R^1f_{\ast} \Q) & H^2(B, R^1f_{\ast} \Q) & 0  \\
\Q &           0                         &                 0                  & \Q 
\end{array}
\]
The bottom and top rows follow from \eqref{R0} and \eqref{Rn} and the assumption that $B$ is a $\Q$-homology sphere. The vanishing of $H^0(B,  R^1f_{\ast}\Q)$ and $H^3(B,  R^2f_{\ast}\Q)$ follow from the assumption $b_1(X) = b_5(X) =0$. The vanishing of $H^0(B,  R^2f_{\ast}\Q)$ and $H^3(B,  R^1f_{\ast}\Q)$ follow from \eqref{Riso} and the assumption $b_1(\check X) = b_5(\check X) =0$. 

Gross \cite{splagI, splagII, TMS} proves that, with the given hypothesis, the spectral sequence degenerates at the $E_2$ page. In particular we have
\[ H^2(X, \Q) \cong H^1(B, R^1f_{\ast} \Q) \cong H^2(B, R^2f_{\ast} \Q) \cong H^4(X, \Q) \]
and similarly for $\check X$. Using \eqref{Riso} we also have 
\[ H^1(B, R^2f_{\ast} \Q) \cong H^1(B, R^1 \check f_{\ast} \Q) \cong H^2(B, R^2 \check f_{\ast} \Q) \cong  H^2(B, R^1f_{\ast} \Q) \]

If $X$ and $\check X$ are Calabi-Yau manifolds we have that the Hodge numbers satisfy
\[ h^{1,1}(X) = \dim H^1(B, R^1f_{\ast} \Q) = \dim H^1(B, R^2 \check f_{\ast} \Q) = h^{1,2}(\check X) \]
In particular, we have the celebrated mirror symmetry of the hodge diamonds of $X$ and $\check X$. 

In this paper we will be concerned with  cohomology with $\Z_2$ coefficients. Also in this case the spectral sequence degenerates at the $E_2$ page and if we assume that $B$ is a $\Z_2$-cohomology sphere and $H^1(X, \Z_2) \cong H^1(\check X, \Z_2) \cong 0$, the $E_2$ page becomes 
\[
\begin{array}{cccc}
\Z_2 &           0                         &                 0                  & \Z_2 \\
0   & H^1(B, R^2f_{\ast} \Z_2) & H^2(B, R^2f_{\ast} \Z_2) & 0  \\
0   & H^1(B, R^1f_{\ast} \Z_2) & H^2(B, R^1f_{\ast} \Z_2) & 0  \\
\Z_2 &           0                         &                 0                  & \Z_2 
\end{array}
\]
Again we have 
\[ 
\begin{split}
 H^2(X, \Z_2) & \cong H^1(B, R^1f_{\ast} \Z_2), \\
 H^1(B, R^2 f_{\ast} \Z_2) & \cong H^2(B, R^1 f_{\ast} \Z_2). \\
 \end{split}
\]
In particular the $\Z_2$-Betti numbers of $X$ satisfy 
\[ 
\begin{split}
 b_2(X, \Z_2) & = \dim H^1(B, R^1f_{\ast} \Z_2) \\
 b_3(X, \Z_2) & = 2 + 2 \dim H^1(B, R^2 f_{\ast} \Z_2) 
 \end{split}
\]
The relation between the hodge numbers of $X$ and the groups $H^q(B, R^p f_{\ast} \Z_2)$ depends on whether $H^{p+q}(X, \Z)$ has torsion. 

\section{Real structures} \label{real_str}

\subsection{The standard real structure} Consider the involution $\iota_0: X_0 \rightarrow X_0$ induced from the map $\alpha \mapsto -\alpha$ on the fibres of the cotangent bundle of $B_0$. It is clearly anti-symplectic in the sense that on the symplectic form $\omega$ it acts as $\iota_0^*\omega = - \omega$. It was proved in \cite{CBMS} that $\iota_0$ extends to a smooth fibre preserving anti-symplectic involution $\iota: X \rightarrow X$. It is clear that $\iota$ fixes the zero section. We call $\iota$ the standard real structure.  We denote by $\Sigma$ the fixed point set of $\iota$, which we can think as the real part of $X$. The zero section is a connected component of $\Sigma$.  We also denote by $\pi: \Sigma \rightarrow B$ the restriction of $f$, i.e. $\pi = f|_{\Sigma}$. Notice that $\pi$ is generically a $2^n$ to $1$ covering. 

\subsection{Twisted real structures} \label{twist_real} In \cite{CBMS} we constructed real structures which can be viewed as a twist of $\iota$ by a Lagrangian section.  Let $\tau: B \rightarrow X$ be a Lagrangian section. Consider on $X_0$ the translation by $\tau$, i.e. the map which on the fibres acts by $\alpha \mapsto \alpha + \tau$. It was shown in \cite{CBMS} that this map extends smoothly to a fibre preserving symplectomorphism of $X$, for simplicity we continue to denote it by $\tau$.  Now assume that $\tau$ is not a square, i.e. that there does not exist another section $\tau'$ such that $\tau = 2\tau'$.  Define 
\[ \iota_{\tau} = \iota \circ \tau \]
Clearly $\iota_\tau$ is a fibre preserving antisymplectic map. To prove that it is an involution, consider the map $\iota \circ \tau \circ \iota$. It is a fibre preserving symplectomorphism. As such, it must be given by the translation by a section. Indeed it is easy to show that 
\[ \iota \circ \tau \circ \iota = - \tau. \]
In particular 
\[ \iota_\tau ^2 = (\iota \circ \tau) \circ (\iota \circ \tau) = (\iota \circ \tau \circ \iota) \circ \tau = (-\tau) \circ (\tau) = \id_{X}. \]
Therefore $\iota_{\tau}$ is an involution. 

The fact that $\tau$ is not a square implies that $\iota_{\tau}$ does not fix any section. We call $\iota_{\tau}$ a {\it twisted real structure}, where $\tau$ is the twist. We denote by $\Sigma_{\tau}$ the fixed point set of $\iota_{\tau}$.  We also denote by $\pi_{\tau}: \Sigma_{\tau} \rightarrow B$ the map given by the restriction of $f$. Also in this case $\pi_{\tau}$ is generically an $2^n$ to $1$ covering. 

\subsection{A long exact sequence} Let us restrict to the three dimensional case $n=3$. We describe the long exact sequence which was found in \cite{CBM:fixpnt} relating the $\Z_2$-cohomology of $\Sigma$ with the cohomology of $X$. Consider the projection $\pi: \Sigma \rightarrow B$.  In particular, since the preimage by $\pi$ of a point in $B$ is finite, we have that the Leray spectral sequence of $\pi$ is quite simple and 
\[ H^{q}(\Sigma, G) \cong H^q(B, \pi_{\ast} G) \]
We restrict to the case $G=\Z_2$. In \cite{CBM:fixpnt} the following result is proved
\begin{lem} There exists a short exact sequence of sheaves on $B$:
\begin{equation} \label{long_seq}
            0 \longrightarrow R^1 f_{\ast} \Z_2 \oplus \Z_2 \oplus \Z_2  \longrightarrow \pi_{\ast} \Z_2 \longrightarrow R^2 f_{\ast} \Z_2 \longrightarrow 0
 \end{equation}
\end{lem}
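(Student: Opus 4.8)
The plan is to work stalkwise: first over the locus $B_0$ of regular values, where everything reduces to linear algebra over $\Z_2$, and then to propagate the resulting sequence across the discriminant $\Delta$ using simplicity. Over $b \in B_0$ the fibre is the torus $F_b = V/\Lambda^*_b$, and since $\iota$ restricts to the reflection $\alpha \mapsto -\alpha$ fixing the zero section, the fibre $\pi^{-1}(b) = \Sigma \cap F_b$ is exactly the $2$-torsion subgroup $W := \tfrac12\Lambda^*_b/\Lambda^*_b \cong \Lambda^*_b \otimes \Z_2$, an $n=3$ dimensional $\Z_2$-vector space whose origin is marked by the zero section. Hence $(\pi_\ast\Z_2)_b = \operatorname{Maps}(W,\Z_2)$, the function algebra $\Z_2[W] \cong \Z_2[x_1,x_2,x_3]/(x_i^2-x_i)$, whose dual $W^\ast = \Lambda_b\otimes\Z_2$ is canonically $H^1(F_b,\Z_2) = (R^1f_\ast\Z_2)_b$.

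The key algebraic observation is that the polynomial-degree filtration $F^0\subset F^1\subset F^2\subset F^3 = \operatorname{Maps}(W,\Z_2)$ has associated graded the exterior algebra: since the leading term of $x_i^2-x_i$ is $x_i^2$, one gets $\operatorname{gr}^\bullet \cong \bigwedge^\bullet W^\ast = H^\bullet(F_b,\Z_2)$, so $\operatorname{gr}^p \cong (R^pf_\ast\Z_2)_b$ with graded dimensions $1,3,3,1$. Explicitly $F^0=\Z_2$ (constants), $F^1/F^0 = W^\ast = (R^1f_\ast\Z_2)_b$ (linear functions), $F^2/F^1 \cong \bigwedge^2 W^\ast = (R^2f_\ast\Z_2)_b$ (recovered by the second finite difference $\phi\mapsto[(u,v)\mapsto \phi(u+v)+\phi(u)+\phi(v)+\phi(0)]$, which is alternating because $2u=0$), and $F^3/F^2 \cong \bigwedge^3 W^\ast = (R^3f_\ast\Z_2)_b \cong \Z_2$. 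I would then build the required sequence by taking the subsheaf $\mathcal K$ whose stalk is spanned by the constants, the linear functions $W^\ast$, and the single function $\delta_0 = \prod_i(1+x_i)$, the indicator of the marked origin. The crucial point is that $\delta_0$ has nonzero image in $F^3/F^2$, so $\Z_2\cdot\delta_0 \cong R^3f_\ast\Z_2 \cong \Z_2$ is realized as a \emph{sub}sheaf and $\mathcal K \cong \Z_2 \oplus R^1f_\ast\Z_2 \oplus \Z_2$; modding out by $\mathcal K$ imposes exactly the relation $x_1x_2x_3 \equiv \sum_{i<j}x_ix_j$, so the composite $F^2/F^1 \hookrightarrow \operatorname{Maps}/F^1 \twoheadrightarrow \operatorname{Maps}/\mathcal K$ is an isomorphism and the quotient is canonically $R^2f_\ast\Z_2$. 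All of this is invariant under the $\Sl(\Z,3)$ monodromy, which fixes the origin and acts by $\det = 1$ on the top piece, so the three sheaves and the two maps glue over all of $B_0$.

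It remains to extend the sequence from $B_0$ to $B$. Here I would invoke $\Z_2$-simplicity, which gives $R^pf_\ast\Z_2 = j_\ast R^pf_{0\ast}\Z_2$, and argue that $\pi_\ast\Z_2$, $\mathcal K$, and the quotient are likewise pushforwards $j_\ast$ of their restrictions to $B_0$; applying the left-exact functor $j_\ast$ to the sequence over $B_0$ then yields exactness at the first two terms for free. The genuine obstacle is right-exactness, i.e. surjectivity of $(\pi_\ast\Z_2)_b \to (R^2f_\ast\Z_2)_b$ on the stalks over $\Delta$, together with the identification of $(\pi_\ast\Z_2)_b$ itself there. This forces a case analysis at the three local models in dimension three — a generic edge point (where $B \cong B_{ff}\times\R$), a positive vertex, and a negative vertex — using the explicit singular-fibre descriptions of Gross and Casta\~no-Bernard--Matessi to compute how the $2^3$ fixed points and their function algebra degenerate, and to check that the monodromy-invariant classes surject onto the local invariants of $\bigwedge^2(\Lambda\otimes\Z_2)$. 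I expect this discriminant computation, especially at the negative vertex whose singular fibre is the most intricate, to be the main technical burden; the regular-locus construction above is essentially formal once the $\delta_0$ device is in place.

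Finally I would observe that the direct-sum splitting of $\mathcal K$, and in particular the appearance of $R^3f_\ast\Z_2$ as a summand rather than a quotient, is precisely what the zero section buys: it is the marked origin that makes $\delta_0$ canonical. This is the structural reason why the analogous twisted statement produces only the nonsplit extensions $0\to\Z_2\to\mathcal L^1_\tau\to R^1f_\ast\Z_2\to0$ and $0\to R^2f_\ast\Z_2\to\mathcal L^2_\tau\to\Z_2\to0$, the affine-but-not-linear structure of $\pi_\tau^{-1}(b)$ obstructing the same splitting.
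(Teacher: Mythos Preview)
Your proposal is correct and the core construction---taking the subsheaf spanned by the constants, the linear functions $W^\ast=(R^1f_\ast\Z_2)_b$, and the single distinguished function $\delta_0$---is exactly what the paper does. The paper's proof is considerably terser than yours: it works only over $B_0$, identifies the quotient not via your polynomial filtration but by the ad~hoc rule $y\mapsto[\delta_y]$ together with the mirror isomorphism $(R^2f_\ast\Z_2)_b\cong\Lambda^\ast\otimes\Z_2\cong\pi^{-1}(b)$, and then simply stops, without addressing the extension across $\Delta$ at all. Your filtration framing is more systematic and is in fact precisely the structure the paper adopts later (Section~\ref{rs:sq}) when making contact with the Renaudineau--Shaw spectral sequence, so you have anticipated that; your worry about right-exactness at the singular fibres is legitimate, but the paper glosses over it entirely rather than carrying out the case analysis you outline.
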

\begin{proof} Let us consider the same sheaves restricted to $B_0$. Notice that for every $b \in B_0$, we have 
 \[ \begin{split} \pi^{-1}(b) &= \tfrac{1}{2} \Lambda^* \mod \Lambda^*  \\
                               \      & \cong \Lambda^* \otimes \Z_2
     \end{split} 
  \]
 In particular it is a subgroup of the fibre $f^{-1}(b)$ isomorphic to $\Z_2^{3}$. On the other hand the stalk of $\pi_{\ast} \Z_2$ at $b$ is canonically identified with 
 \[ (\pi_{\ast} \Z_2)_b = \operatorname{Maps} ( \pi^{-1}(b), \Z_2). \]
  Moreover we have isomorphisms \eqref{Rlam} for $\Z_2$ coefficients
\begin{equation} \label{R2} 
  \begin{split}
        & (R^1 f_{\ast} \Z_2)_b = \Lambda \otimes \Z_2,  \\ 
        & (R^2 f_{\ast} \Z_2)_b \cong \bigwedge^2 \Lambda  \otimes \Z_2.
  \end{split}
\end{equation}
Notice that $(R^1 f_{\ast} \Z_2)_b$ is the space of linear maps from $\pi^{-1}(b)$ to $\Z_2$, so it injects in  $(\pi_{\ast} \Z_2)_b$. Concerning the two $\Z_2$ summands in the lefthand side of the sequence we have
\[ \Z_2 \oplus \Z_2 = \langle 1 \rangle \oplus \langle \delta_0 \rangle \]
where $1$ is the constant map equal to $1$ and $\delta_0$ is the delta function at $0$ (i.e. the map which maps $0 \in \pi^{-1}(b)$ to $1$ and everything else to $0$). 

From \eqref{Rcheck} and $\eqref{Riso}$ we have the isomorphism
\begin{equation} \label{R2-1}
      (R^2 f_{\ast} \Z_2)_b \cong \Lambda ^* \otimes \Z_2 \cong \pi^{-1}(b).
 \end{equation}
One can identify $\pi^{-1}(b)$ with the quotient of the first two groups of the above sequence by identifying a non zero point $y \in \pi^{-1}(b)$ with the class (in the quotient) of the map $\delta_{y}$, the delta function at $y$. 
\end{proof}

Let us assume that $B$ is a homology $\Z_2$ sphere. Then the short exact sequence induces the long exact sequence in cohomology :

{\small \begin{equation}  \label{connecting}
\begin{split}
          0  \longrightarrow & H^{0}(B, R^1 f_{\ast} \Z_2) \oplus (\Z_2)^2 \longrightarrow H^{0}(\Sigma, \Z^2)   \longrightarrow   H^{0}(B, R^2 f_{\ast} \Z_2) \longrightarrow \\
          \ & \longrightarrow H^{1}(B, R^1 f_{\ast} \Z_2) \longrightarrow H^{1}(\Sigma, \Z^2)   \longrightarrow   H^{1}(B, R^2 f_{\ast} \Z_2) \stackrel{\beta}{\longrightarrow}  \\
          \ &  \longrightarrow H^{2}(B, R^1 f_{\ast} \Z_2)  \longrightarrow  H^{2}(\Sigma, \Z_2) \longrightarrow H^{2}(B, R^2 f_{\ast} \Z_2) \longrightarrow \ldots 
\end{split} 
\end{equation}} 

The map  $\beta:  H^{1}(B, R^2 f_{\ast} \Z_2) \rightarrow H^{2}(B, R^1 f_{\ast} \Z_2)$ is the connecting homomorphism and it essentially determines the cohomology of $\Sigma$.

In particular the following corollaries follow from the properties of the Leray spectral sequence described in \S \ref{Leray}. 

\begin{cor} \label{cor1_seq} If $B$ is a cohomology $\Z_2$-sphere and $H^1(X, \Z_2)$ and $H^1(\check X, \Z_2) $ are both zero, then $\Sigma$ has two connected components and the long exact sequence \eqref{connecting} splits as
\[ \begin{split} 0 & \rightarrow H^{1}(B, R^1 f_{\ast} \Z_2) \rightarrow H^{1}(\Sigma, \Z_2)   \rightarrow   H^{1}(B, R^2 f_{\ast} \Z_2) \stackrel{\beta}{\rightarrow} H^{2}(B, R^1 f_{\ast} \Z_2) \\
 & \rightarrow  H^{2}(\Sigma, \Z_2) \rightarrow H^{2}(B, R^2 f_{\ast} \Z_2) \rightarrow 0 
\end{split}\]
\end{cor}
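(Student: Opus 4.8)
The plan is to substitute the short exact sequence of sheaves \eqref{long_seq} into the associated long exact sequence \eqref{connecting}, use the vanishing entries of the $\Z_2$ second page recorded in \S\ref{Leray} to collapse both ends, and then invoke Poincar\'e duality on $\Sigma$ to close up the right-hand end. Throughout I use the identification $H^q(\Sigma, \Z_2) \cong H^q(B, \pi_{\ast}\Z_2)$ to read off the middle terms.

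First I would assemble the relevant vanishings. Since $B$ is a $\Z_2$-cohomology sphere, $H^q(B, \Z_2) = \Z_2$ for $q \in \{0,3\}$ and $H^q(B,\Z_2) = 0$ for $q \in \{1,2\}$, so the two constant summands in \eqref{long_seq} contribute a copy of $(\Z_2)^2$ to $H^0$ and to $H^3$ and nothing in degrees $1$ and $2$. From the hypotheses $H^1(X, \Z_2) = 0$ and $H^1(\check X, \Z_2) = 0$, together with the mirror isomorphism \eqref{Riso}, the $E_2$-page analysis of \S\ref{Leray} gives
\[ H^0(B, R^1 f_{\ast}\Z_2) = H^0(B, R^2 f_{\ast}\Z_2) = H^3(B, R^1 f_{\ast}\Z_2) = H^3(B, R^2 f_{\ast}\Z_2) = 0. \]

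With these substitutions the extreme left of \eqref{connecting} reduces to $0 \to (\Z_2)^2 \to H^0(\Sigma, \Z_2) \to 0$, so $H^0(\Sigma,\Z_2) \cong (\Z_2)^2$ and $\Sigma$ has exactly two connected components. Because $H^0(B, R^2 f_{\ast}\Z_2) = 0$, the connecting map emanating from it is zero, so the sequence decouples after degree $0$; from degree $1$ onward it reproduces the claimed six-term sequence, except that its right end continues with a connecting map $\delta$ into the degree-$3$ contribution of the constant summands, giving an exact tail
\[ H^2(B, R^2 f_{\ast}\Z_2) \xrightarrow{\ \delta\ } (\Z_2)^2 \to H^3(\Sigma,\Z_2) \to 0, \]
where $(\Z_2)^2 = H^3\big(B, R^1 f_{\ast}\Z_2 \oplus \Z_2 \oplus \Z_2\big)$ comes solely from the two constant summands (using $H^3(B, R^1 f_{\ast}\Z_2)=H^3(B,R^2 f_{\ast}\Z_2)=0$).

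The only remaining point, which I expect to be the main obstacle, is to show that $\delta$ vanishes, so that $H^2(\Sigma,\Z_2) \to H^2(B, R^2 f_{\ast}\Z_2)$ is surjective and the sequence terminates as stated. Here I would use that $\Sigma$, being the fixed locus of the smooth anti-symplectic involution $\iota$, is a closed Lagrangian submanifold of $X$, hence a closed $3$-manifold; Poincar\'e duality with $\Z_2$ coefficients then yields $H^3(\Sigma,\Z_2) \cong H_0(\Sigma,\Z_2) \cong (\Z_2)^2$, the last group having rank equal to the number of components found above. Exactness of the displayed tail then exhibits a surjection $(\Z_2)^2 \to H^3(\Sigma,\Z_2)$ of $\Z_2$-vector spaces of equal dimension, hence an isomorphism, so that $\operatorname{im}\delta = 0$, i.e. $\delta = 0$. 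This produces precisely the six-term exact sequence of the statement and completes the proof.
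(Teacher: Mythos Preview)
Your argument is correct. The paper does not spell out a proof of this corollary; it simply records it as a consequence of the vanishing in \S\ref{Leray} and of \cite{CBM:fixpnt}. Your write-up fills in exactly the missing details: the collapse at both ends of \eqref{connecting} using $H^0(B,R^pf_\ast\Z_2)=H^3(B,R^pf_\ast\Z_2)=0$ and $H^1(B,\Z_2)=H^2(B,\Z_2)=0$, and then the one point that genuinely needs an extra ingredient, namely that the connecting map $\delta: H^2(B,R^2f_\ast\Z_2)\to (\Z_2)^2$ vanishes. Your Poincar\'e-duality argument for this is clean and correct: since $\iota$ is a smooth anti-symplectic involution on the compact symplectic manifold $X$, its fixed locus $\Sigma$ is a closed Lagrangian, hence a closed $3$-manifold, so $\dim H^3(\Sigma,\Z_2)$ equals the number of components, which you have already shown to be two; exactness then forces $\delta=0$.
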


\begin{cor} \label{cor2_seq}  Under the same hypothesis of Corollary \ref{cor1_seq}, with the additional assumption that $X$ is a Calabi-Yau variety and the cohomologies of $X$ and $\check X$ have no $2$-torsion, we have that the $\Z_2$ Betti numbers of $\Sigma$ satisfy
\[ b_q(\Sigma, \Z_2) \leq h^{q, 3-q}(X) + h^{q,q}(X). \]
Indeed the hypothesis and the properties of the Leray spectral sequence imply 
\[ \dim H^{p}(B, R^q f_{\ast} \Z_2) = h^{p,q}(X).  \]

These inequalities  coincide with those proved by Renaudineau-Shaw \cite{ren_shaw:bound} for any real hypersurface arising from primitive patchworking in a toric variety. Notice however that in our case $X$ is not necessarily a hypersurface (see for instance the case of Schoen's Calabi-Yau, \cite{CBM:fixpnt} and \cite{ar_pr:schoen}). 

\end{cor}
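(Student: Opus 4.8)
The plan is to read the $\Z_2$-Betti numbers of $\Sigma$ off the split six-term sequence of Corollary \ref{cor1_seq}, after first identifying the dimensions of the groups $H^p(B, R^q f_{\ast} \Z_2)$ with Hodge numbers of $X$. For the latter, recall from \S\ref{Leray} that both the rational and the $\Z_2$ Leray spectral sequences of $f$ degenerate at $E_2$, so that $b_k(X,\Z_2) = \sum_{p+q=k}\dim H^q(B,R^p f_{\ast}\Z_2)$ and similarly over $\Q$, where $\dim H^q(B,R^p f_{\ast}\Q) = h^{p,q}(X)$. Since $X$ and $\check X$ have no $2$-torsion, universal coefficients give $b_k(X,\Z_2) = b_k(X,\Q)$ for all $k$. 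In each total degree the degenerate $E_2$ page carries at most the single symmetric pair exchanged by the isomorphism $H^1(B,R^2 f_{\ast}\Z_2) \cong H^2(B,R^1 f_{\ast}\Z_2)$ of \S\ref{Leray} (and its rational analogue), so the equality of total Betti numbers forces $\dim H^q(B,R^p f_{\ast}\Z_2) = \dim H^q(B,R^p f_{\ast}\Q)$ slot by slot; together with the rational formula and Hodge symmetry $h^{q,p}=h^{p,q}$ this yields $\dim H^p(B,R^q f_{\ast}\Z_2) = h^{p,q}(X)$.

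Next I would substitute these dimensions into the split sequence. Writing $b_q(\Sigma,\Z_2) = \dim H^q(B,\pi_{\ast}\Z_2)$, the sequence of Corollary \ref{cor1_seq} exhibits $H^1(\Sigma,\Z_2)$ as an extension of $\ker\beta \subseteq H^1(B,R^2 f_{\ast}\Z_2)$ by $H^1(B,R^1 f_{\ast}\Z_2)$, and $H^2(\Sigma,\Z_2)$ as an extension of $H^2(B,R^2 f_{\ast}\Z_2)$ by $\operatorname{coker}\beta$, a quotient of $H^2(B,R^1 f_{\ast}\Z_2)$. Bounding the kernel and cokernel by the ambient groups gives $b_1(\Sigma,\Z_2) \le h^{1,2}(X)+h^{1,1}(X)$ and $b_2(\Sigma,\Z_2)\le h^{2,1}(X)+h^{2,2}(X)$, which are the asserted inequalities for $q=1,2$. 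For $q=0$ the two connected components of $\Sigma$ give $b_0 = 2 = h^{0,3}(X)+h^{0,0}(X)$, and since $\Sigma$ is the fixed locus of a smooth involution on the closed $6$-manifold $X$, hence a closed $3$-manifold, Poincar\'e duality over $\Z_2$ gives $b_3 = b_0 = 2 = h^{3,0}(X)+h^{3,3}(X)$. Finally, when $X$ is itself a hypersurface one has $h^{q,q}(X)=1$ in the relevant range, so the bound reduces to $h^{q,3-q}(X)+1$ and coincides with inequalities \eqref{bounds:bh} of Renaudineau--Shaw; the point of the remark is precisely that in general $X$ need not be a hypersurface.

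The only delicate step is the slot-by-slot passage from $\Q$ to $\Z_2$ coefficients: universal coefficients a priori give only the inequality $\dim H^q(B,R^p f_{\ast}\Z_2)\ge \dim H^q(B,R^p f_{\ast}\Q)$, and to upgrade this to an equality one must exploit the special shape of the degenerate $E_2$ page, where each total degree contributes effectively one free parameter, together with the no-$2$-torsion hypotheses on both $X$ and $\check X$. Once this dimension formula is secured, the Betti bounds follow formally from the split sequence, the only slack being $\operatorname{rank}\beta$; in particular the $q=1,2$ inequalities are equalities exactly when $\beta=0$.
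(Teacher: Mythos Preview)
Your argument is correct and is exactly what the paper has in mind: the paper offers no separate proof for this corollary beyond the one-line ``Indeed'' pointing back to the Leray spectral sequence properties of \S\ref{Leray}. You have correctly supplied the missing details---the slot-by-slot dimension comparison via the shape of the degenerate $E_2$ page together with $b_k(X,\Z_2)=b_k(X,\Q)$, and the endpoint cases $q=0,3$ via the component count from Corollary~\ref{cor1_seq} and $\Z_2$-Poincar\'e duality on the Lagrangian fixed locus $\Sigma$.
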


\subsection{Mirror symmetry} 
Using the isomorphism \eqref{Riso}, we can interpret the connecting homomorphism $\beta$ in \eqref{connecting} as a map on the cohomology of the mirror $\check X$:
\[  \beta:  H^{1}(B, R^1 \check f_{\ast} \Z_2) \rightarrow H^{2}(B, R^2 \check f_{\ast} \Z_2)\]
In \cite{arg_princ:realCY}, Arguz and Prince proved the following remarkable result
\begin{thm} \label{arguz_prince}
The connecting homomorphism $\beta$ in the long exact sequence \eqref{connecting} coincides with the squaring map 
\[ \begin{split} 
            \operatorname{Sq}: H^{1}(B, R^1 \check f_{\ast} \Z_2) & \longrightarrow H^{2}(B, R^2 \check f_{\ast} \Z_2) \\
                                                                                            D  & \longmapsto D^2
   \end{split}
            \]
\end{thm}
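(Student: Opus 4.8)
The plan is to compute the connecting homomorphism $\beta$ directly at the level of \v{C}ech cochains and to match the resulting formula with the cup-square. Since all the sheaves in \eqref{long_seq} are of the form $j_{\ast}(-)$ of their restrictions to $B_0$ (this is the $\Z_2$-simplicity of the fibration) and $B\setminus B_0=\Delta$ has codimension two, it suffices to carry out the computation over $B_0$ with respect to a good cover $\{U_i\}$ and then appeal to naturality to globalize. A class $D\in H^1(B,R^2f_{\ast}\Z_2)=H^1(B,R^1\check f_{\ast}\Z_2)$ is represented by a $1$-cocycle $\{y_{ij}\}$, where under \eqref{R2-1} each $y_{ij}$ is a local section of $\pi^{-1}$, i.e.\ an element of $\Lambda^{\ast}\otimes\Z_2$, and the cocycle condition reads $y_{ik}=y_{ij}+y_{jk}$.

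First I would fix, over each $U_i$, the identification $(\pi_{\ast}\Z_2)_b=\operatorname{Maps}(\Lambda^{\ast}\otimes\Z_2,\Z_2)$ and lift $y_{ij}$ to the delta function $\delta_{y_{ij}}$, a set-theoretic lift of $D$ through the surjection $\pi_{\ast}\Z_2\to R^2f_{\ast}\Z_2$. Applying the \v{C}ech differential produces the $2$-cocycle
\[ c_{ijk}=\delta_{y_{ij}}+\delta_{y_{jk}}+\delta_{y_{ik}}, \]
which, by the cocycle condition and the additivity of $[\delta_y]=y$ in the quotient, lands in the subsheaf $R^1f_{\ast}\Z_2\oplus\langle 1\rangle\oplus\langle\delta_0\rangle$; the class $\beta(D)$ is its image under the projection onto $R^1f_{\ast}\Z_2$. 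The heart of the argument is the evaluation of this projection. Writing $\delta_y=\prod_i(1+a_i+x_i)$ over $\Z_2$ in the linear coordinates $x_i$ (with $y=\sum a_ie_i^{\ast}$), I would extract the $R^1f_{\ast}\Z_2$-component of $c_{ijk}=\delta_a+\delta_b+\delta_{a+b}$ by first reading off the coefficient of the top monomial $x_1x_2x_3$, which fixes the multiple of $\delta_0$ to subtract, and then reading off the remaining linear part (the constant $\langle 1\rangle$ and the corrected $\delta_0$ being annihilated by the projection). A short computation gives the purely bilinear expression whose coefficient of $x_1$ is $a_2b_3+a_3b_2$, with the coefficients of $x_2,x_3$ obtained by cyclic permutation; this is exactly $y_{ij}\wedge y_{jk}\in\bigwedge^2(\Lambda^{\ast}\otimes\Z_2)$ after the Poincar\'e-duality identification $R^1f_{\ast}\Z_2=\Lambda\otimes\Z_2\cong\bigwedge^2\Lambda^{\ast}\otimes\Z_2=R^2\check f_{\ast}\Z_2$ coming from \eqref{Rcheck} and \eqref{Riso}.

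Finally I would recognize $\{y_{ij}\wedge y_{jk}\}$ as the standard \v{C}ech representative of the cup-square: the cup product on $H^1(B,R^1\check f_{\ast}\Z_2)$ followed by the wedge $R^1\check f_{\ast}\Z_2\otimes R^1\check f_{\ast}\Z_2\to R^2\check f_{\ast}\Z_2$ sends $D\otimes D$ to precisely $\{D_{ij}\wedge D_{jk}\}=\{y_{ij}\wedge y_{jk}\}$, which is $D^2$ in the ring $\bigoplus_p H^{\ast}(B,R^p\check f_{\ast}\Z_2)$ identified by Gross with $H^{\ast}(\check X,\Z_2)$. Note that, although the coefficient pairing is alternating, $D^2$ need not vanish over $\Z_2$ precisely because the \v{C}ech cup formula pairs $D_{ij}$ with $D_{jk}$ rather than with itself.

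I expect the main obstacle to be the careful bookkeeping of the extension data: justifying that the lift $\delta_{y_{ij}}$ together with the splitting $\langle 1\rangle\oplus\langle\delta_0\rangle$ makes the projection onto $R^1f_{\ast}\Z_2$ well defined and compatible over overlaps, and confirming that the resulting local bilinear identity glues to the global statement and agrees with the cup product computed on the same cover. A secondary point is to verify that no contribution is lost across the discriminant $\Delta$, which is exactly where the $\Z_2$-simplicity and the codimension-two nature of $\Delta$ enter.
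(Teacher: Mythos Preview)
Your proposal is correct and follows the same overall strategy that the paper uses for its generalization, Theorem~\ref{beta_MS} (the paper does not re-prove Theorem~\ref{arguz_prince} itself but cites \cite{arg_princ:realCY} and says its own argument is modeled on that one): represent $D$ by a \v{C}ech $1$-cocycle, choose an explicit lift to $\pi_{\ast}\Z_2$, apply the \v{C}ech differential, and read off the result in the subsheaf.

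The only substantive difference is in how the projection to $R^1f_{\ast}\Z_2$ is extracted. You lift $y_{ij}$ to the point-delta $\delta_{y_{ij}}$ and compute the linear part of $\delta_a+\delta_b+\delta_{a+b}$ by expanding each $\delta_y=\prod_i(1+y_i+x_i)$ as a polynomial and isolating the linear terms after removing the $\delta_0$- and constant-components; this is a clean algebraic calculation that exploits the canonical origin present in the untwisted case. The paper (in the twisted setting) instead lifts to the characteristic function of a \emph{line} $\delta_{Z_{D_{ij}}}$ through the basepoint $-\tilde\tau_i/2$ and determines the linear part via a geometric case analysis on the relative position of the three lines $Z_{D_{ij}},Z_{D_{jk}},Z_{D_{ki}}$ inside the affine $3$-space $\pi_\tau^{-1}(b)$. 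Your algebraic route is more direct for the untwisted statement; the paper's geometric route is what is needed once one twists, since the fibres become affine rather than linear and no canonical point-lift is available.
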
 

Notice that if $B$ is a $\Z_2$ homology sphere and $H^1(X, \Z_2) = H^1(\check X, \Z_2) = 0$, then $H^{p}(B, R^p \check f_{\ast} \Z_2) \cong  H^{2p}(\check X, \Z_2)$.  The map $\beta$ in this case is the squaring with respect to the usual cup product in cohomology.

\section{Short exact sequences}  \label{seseq}
Our first goal is to generalize the short exact sequence \eqref{long_seq} to the case of twisted real structures. We will prove the following result
\begin{thm} \label{sheaf_seq}
There exist sheaves $\mathcal L^{1}_{\tau}$ and $\mathcal L^{2}_{\tau}$ over $B$ and a short exact sequence 
\begin{equation} \label{short_seq1}
            0 \longrightarrow \mathcal L^{1}_{\tau}  \longrightarrow \pi_{\tau_\ast} \Z_2 \longrightarrow \mathcal L^{2}_{\tau} \longrightarrow 0,
 \end{equation} 
such that $\mathcal L^{1}_{\tau}$ and $\mathcal L^{2}_{\tau}$ are related to the topology of $X$ by the following short exact sequences
 \begin{equation} \label{short_seq2}
            0 \longrightarrow \Z_2  \longrightarrow \mathcal L^1_{\tau} \longrightarrow  R^1 f_{\ast} \Z_2 \longrightarrow 0,
 \end{equation}
 \begin{equation} \label{short_seq3}
            0 \longrightarrow R^2 f_{\ast} \Z_2  \longrightarrow \mathcal L^2_{\tau} \longrightarrow \Z_2 \longrightarrow 0.
 \end{equation}
\end{thm}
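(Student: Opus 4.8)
The plan is to reduce the statement to a stalkwise computation over the locus $B_0$ of regular values, where the fibres of $\pi_{\tau}$ are honest affine spaces over $\Z_2$, and then to propagate the resulting exact sequences to all of $B$ by pushing forward along $j\colon B_0 \to B$ and invoking $\Z_2$-simplicity. First I would pin down the fibres of $\pi_{\tau}$ over $B_0$. Writing the standard real structure additively on $F_b = V/\Lambda^*$ as $\iota(\alpha) = -\alpha$ and the twist as translation by $\tau$, one gets $\iota_{\tau}(\alpha) = -\alpha - \tau$, so that $\pi_{\tau}^{-1}(b)$ is the set $\{\alpha : 2\alpha + \tau = 0 \bmod \Lambda^*\}$. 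This is a torsor over the $2$-torsion subgroup $\tfrac12\Lambda^*/\Lambda^* \cong \Lambda^* \otimes \Z_2$; since $\tau$ is not a square there is no distinguished origin, so $\pi_{\tau}^{-1}(b)$ is a $3$-dimensional affine $\Z_2$-space rather than a vector space. This is the one structural difference from the untwisted Lemma, and it is what forces \eqref{short_seq2} and \eqref{short_seq3} to be genuine (non-split) extensions.

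The key tool is the degree filtration on $\operatorname{Maps}(\pi_{\tau}^{-1}(b), \Z_2)$: writing functions on a $3$-dimensional affine $\Z_2$-space as square-free polynomials, the subspaces $F_0 \subset F_1 \subset F_2 \subset F_3$ of functions of degree $\le k$ are intrinsically defined, hence invariant under the affine (indeed $\Sl(\Z,3)$) monodromy, and so glue to subsheaves of $\pi_{\tau *}\Z_2|_{B_0}$. I would set $\mathcal L^1_{\tau} = F_1$ (the affine functions) and $\mathcal L^2_{\tau} = F_3/F_1$, so that \eqref{short_seq1} holds over $B_0$ by construction. The graded pieces are identified, via the affine structure and \eqref{R2}, with exterior powers of $W^* = \Lambda \otimes \Z_2$: one has $F_1/F_0 \cong \Lambda \otimes \Z_2 = (R^1 f_{\ast}\Z_2)_b$ and $F_2/F_1 \cong \bigwedge^2 \Lambda \otimes \Z_2 = (R^2 f_{\ast}\Z_2)_b$, while the top piece $F_3/F_2 \cong \bigwedge^3 \Lambda \otimes \Z_2$ is canonically $\Z_2$ by the orientation isomorphism \eqref{Rn} reduced modulo $2$.

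Granting this, the remaining two sequences read off directly. Sequence \eqref{short_seq2} is the splitting of an affine function into its constant term $F_0 = \Z_2$ and its well-defined linearization $\phi \mapsto d\phi$, where $d\phi(v) = \phi(\alpha + v) - \phi(\alpha)$ is independent of $\alpha$ and lands in $R^1 f_{\ast}\Z_2$. Sequence \eqref{short_seq3} is the induced two-step filtration of $\mathcal L^2_{\tau} = F_3/F_1$, with subsheaf $F_2/F_1 \cong R^2 f_{\ast}\Z_2$ and quotient $F_3/F_2 \cong \Z_2$; concretely the quotient map $\mathcal L^2_{\tau} \to \Z_2$ is the total sum $\phi \mapsto \sum_{x} \phi(x)$, which vanishes on $F_2$ (any monomial of degree $<3$ sums to $0$ over the $8$ points) and sends any delta function to $1$, hence is surjective.

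Finally I would extend over the discriminant. The end terms $\Z_2$, $R^1 f_{\ast}\Z_2$, $R^2 f_{\ast}\Z_2$, $\Z_2$ are all already the $j_{\ast}$-pushforwards of their $B_0$-restrictions by $\Z_2$-simplicity, and since $\pi_{\tau}$ is proper with finite fibres its pushforward has stalk $\operatorname{Maps}(\pi_{\tau}^{-1}(b),\Z_2)$ at every $b$; defining $\mathcal L^i_{\tau} = j_{\ast}(\mathcal L^i_{\tau}|_{B_0})$, left-exactness of $j_{\ast}$ yields the three sequences away from the surjectivity of the final arrows. I expect the main obstacle to be precisely this last point: checking that $\pi_{\tau *}\Z_2 = j_{\ast}(\pi_{\tau *}\Z_2|_{B_0})$ and that no surviving $R^1 j_{\ast}$-obstruction breaks exactness on the stalks over $\Delta$. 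This must be verified against the explicit local models of Section \ref{lagFib} (the edge model $B_{ff} \times \R$ and the positive and negative vertices), by computing the degenerate fibres $\pi_{\tau}^{-1}(b)$ and the local monodromy action on the degree filtration, exactly as $\Z_2$-simplicity is established for the sheaves $R^p f_{\ast}\Z_2$ themselves.
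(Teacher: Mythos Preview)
Your proposal is correct and follows essentially the same architecture as the paper: define $\mathcal L^1_{\tau}$ as the subsheaf of affine functions on the fibres $\pi_{\tau}^{-1}(b)$, let $\mathcal L^2_{\tau}$ be the quotient, identify the graded pieces over $B_0$, and push forward along $j$ invoking $\Z_2$-simplicity. Your use of the full degree filtration $F_0\subset F_1\subset F_2\subset F_3$ and the identification $F_k/F_{k-1}\cong \bigwedge^k(\Lambda\otimes\Z_2)$ is exactly the formulation the paper adopts later in \S\ref{rs:sq}; the paper's own proof in \S\ref{seq:proof} instead realizes the map $R^2 f_{\ast}\Z_2\to\mathcal L^2_{\tau}$ concretely by sending a direction $v\in\Lambda^*\otimes\Z_2$ to the class $[\delta_{Z_v}]$ of the characteristic function of a line with direction $v$, and the quotient $\Z_2$ as generated by $[\delta_q]$ for a point $q$. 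The two descriptions agree via the volume-form isomorphism \eqref{Riso} (e.g.\ $[\delta_{Z_{e_1}}]=[x_2x_3]\leftrightarrow e_2^*\wedge e_3^*\leftrightarrow e_1$), so there is no mathematical discrepancy.

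The practical difference is that the paper's explicit line-and-point representatives are precisely what is fed into the \v Cech computation of $\beta'$ in Theorem~\ref{beta_MS}: the cocycle $D_{ij}$ is lifted to $\delta_{Z_{D_{ij}}}$ with $Z_{D_{ij}}$ a line through $-\tfrac{\tilde\tau_i}{2}$, and the case analysis is entirely in terms of incidences of these lines. So while your polynomial-degree framing is cleaner for stating the theorem, you would still need to unwind it to the $\delta_{Z_v}$ picture to carry out the main computation. As for your stated concern about surjectivity after $j_{\ast}$ over $\Delta$: the paper treats this point lightly, simply appealing to simplicity, so your instinct to flag it is reasonable but it is not treated as a genuine obstacle in the paper.
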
 

\subsection{Classification of Lagrangian sections} \label{section_class} The Lagrangian sections of $f: X \rightarrow B$ are classified, up to Hamiltonian equivalence, by the group $H^{1}(B, j_{\ast} \Lambda^*)$. This can be seen as follows. Take some covering $\{ U_i \}_{i \in I}$ of $B$ by open sets homeomorphic to $3$-balls.  Let us assume we are away from singularities, i.e. assume $U_i \cap \Delta = \emptyset$. Given a Lagrangian section $\tau$, consider $\tau_{|U_i}: U_i \rightarrow T^*U_i / \Lambda^*$.  Since $U_i$ is homeomorphic to a $3$-ball, we can find a Lagrangian lift $\tilde \tau_{U_i}: U_i \rightarrow T^*U_i$ of $\tau_{|U_i}$.  Then, since $\tau$ is a global section, we must have that on overlaps $U_i \cap U_k$ 
\[ \tilde \tau_{U_i} - \tilde \tau_{U_k} \in \Lambda^*. \]
This can be extended to the case when $U_i \cap \Delta \neq \emptyset$.  Therefore, to the section $\tau$ we can associate the Cech $1$-cocycle $\{ U_i \cap U_k,  \tilde \tau_{U_i} - \tilde \tau_{U_k} \}$ giving a class in $H^1(B, j_{\ast} \Lambda^*)$, which we continue to denote by $\tau$. It can be shown that any class can be represented by a Lagrangian section and that two Lagrangian sections represent the same class if and only if they are Hamiltonian isotopic. 

\subsection{Local description of $\Sigma_{\tau}$} \label{sigmatau}Given the above local description of a Lagrangian section $\tau$, it is easy to describe the involution $\iota_{\tau}$ locally. We will do this away from singularities, i.e. when $U_i \cap \Delta = \emptyset$. Indeed 
\[ \begin{split}
        \iota_{\tau}: T^*U_i/ \Lambda^* & \rightarrow T^*U_i/ \Lambda^* \\
                                    [\alpha] & \mapsto [ -(\alpha + \tilde \tau_{U_i}) ]
   \end{split} \]
where $\alpha$ is a $1$-form and $[ \cdot]$ denotes the class in the quotient by $\Lambda^*$. Then, locally, we have 
\[ \Sigma_{\tau |_{U_i}} = - \tfrac{\tilde \tau_{U_i}}{2} + \tfrac{1}{2} \Lambda^* \mod \Lambda^* \]
In particular the fibre $\pi_{\tau}^{-1}(b)$ has the structure of an affine space modelled on $\Lambda^* \otimes \Z_{2}$. 

\subsection{The sheaves $\mathcal L^{1}_{\tau}$ and $\mathcal L^2_{\tau}$} The sheaf $\mathcal L^{1}_{\tau}$ is easily described. Its stalks over $b \in B_0$ are the affine maps $\pi^{-1}(b) \rightarrow \Z_2$, which embed inside $\pi_{\tau \ast} \Z_2$. Moreover, on $B_0$, we have an obvious sequence
\[
            0 \longrightarrow \Z_2  \longrightarrow \mathcal L^1_{\tau} \longrightarrow  \Lambda \otimes \Z_2 \longrightarrow 0,
\]
where the left hand side is the inclusion of the constant maps, while the righthand side is given by taking the linear part of an affine map. If we push this sequence forward by $j: B_0 \rightarrow B$ and we use simplicity we get the sequence \eqref{short_seq2}.  By definition $\mathcal L^2_{\tau}$ is the quotient of the inclusion $\mathcal L^{1}_{\tau} \rightarrow \pi_{\tau \ast} \Z_2$. 

\subsection{Proof of Theorem \ref{sheaf_seq}} \label{seq:proof} Let $S$ be an affine space modeled on a $\Z_2$-vector space $V$ of dimension $3$. Let $L^1_S = \operatorname{Aff}(S)$ be the space of affine functions on $S$.  Let $L^2_S$ be the quotient between  $\operatorname{Maps}(S, \Z_2)$ and $L^1_S$. Given a subset $A \subset S$, let $\delta_{A}$ denote the function which is $1$ on $A$ and $0$ elsewhere and by $[\delta_A]$ the class of $\delta_A$ in $L^2_S$. An affine function on $S$ is of the type $\delta_{W}$ or $1 + \delta_{W}$ for some affine subspace $W \subseteq S$ of codimension $1$ or $0$. It is not hard prove that $L^2_S$ is generated by the elements of type $[ \delta_Z]$, where $Z \subset S$ is either a line or a point. Moreover, given two lines $Z_1$ and $Z_2$, then $[\delta_{Z_1}] = [\delta_{Z_2}]$ if and only if $Z_1$ and $Z_2$ are parallel. We then have an exact sequence
\begin{equation} \label{seq_aff}
   0 \rightarrow V \rightarrow L^{2}_{S} \rightarrow \Z_2 \rightarrow 0 
\end{equation}
Where a vector $v \in V$ is mapped to $[\delta_{Z_v}]$ where $Z_v$ is a line with direction $v$ if $v \neq 0$ or the empty set if $v=0$. It can be easily proved that
\[ [\delta_{Z_{v+w}}] = [\delta_{Z_v}] +  [\delta_{Z_w}] \]
so that the first map is linear. The quotient of $L^{2}_{S}$ by $V$ is $\Z_2$ and it is generated by the class of $[\delta_{q}]$ where $q$ is a point in $S$. 
Given $b \in B_0$, let $S = \pi_{\tau}^{-1}(b)$. As we said, $S$ is an affine space modeled on $\Lambda^* \otimes \Z_2$.  Using the isomorphism $(R^2 f_{\ast} \Z_2)_b \cong \Lambda^* \otimes \Z_2$ (see \eqref{R2} and \eqref{R2-1}), the sequence \eqref{seq_aff} becomes the sequence \eqref{short_seq3}.

\subsection{The two dimensional case}\label{dimension2} If $B$ is two dimensional (hence the affine base of a K3 surface), then we only have the sheaf $\mathcal L^1_{\tau}$ of affine maps inside $\pi_*\Z_2$ which satisfies \eqref{short_seq2} and
\begin{equation} \label{L1dim2}
         0  \longrightarrow \mathcal L^1_{\tau} \longrightarrow \pi_{\ast} \Z_2 \longrightarrow \Z_2 \longrightarrow 0. 
\end{equation} 
This follows from the fact that if $S$ is an affine space over $\Z_2$ of dimension $2$ then we have
\[ 0 \rightarrow \operatorname{Aff}(S) \rightarrow \operatorname{Maps}(S, \Z_2) \rightarrow \Z_2 \rightarrow 0 \]
where $\Z_2$ is generated by $[\delta_q]$, with $q \in S$.

\section{The connecting homomorphism} \label{cnnctng}

The sequence  \eqref{short_seq1} gives the long exact sequence in cohomology
\begin{equation}  \label{connecting2}
\begin{split}
          0  \longrightarrow & H^{0}(B, \mathcal L^{1}_{\tau})  \longrightarrow H^{0}(\Sigma_{\tau}, \Z^2)   \longrightarrow   H^{0}(B,  \mathcal L^{2}_{\tau}) \longrightarrow \\
          \ & H^{1}(B, \mathcal L^{1}_{\tau}) \longrightarrow H^{1}(\Sigma_{\tau}, \Z^2)   \longrightarrow   H^{1}(B,  \mathcal L^{2}_{\tau} ) \stackrel{\beta}{\longrightarrow}  \\
          \ & H^{2}(B,  \mathcal L^{1}_{\tau} )  \longrightarrow  H^{2}(\Sigma_{\tau}, \Z_2) \longrightarrow H^{2}(B,  \mathcal L^{2}_{\tau}) \longrightarrow \ldots 
\end{split}
\end{equation}
Combining this with the maps induced by the sequences \eqref{short_seq2} and \eqref{short_seq3} we obtain the diagram
\begin{equation} \label{beta}  \begin{CD}
 H^{1}(B, R^2f_{\ast} \Z_2 ) @>\beta'>>  H^{2}(B,  R^1f_{\ast} \Z_2 ) \\
@VVV @AAA \\
H^{1}(B,  \mathcal L^{2}_{\tau} )  @>\beta>>  H^{2}(B,  \mathcal L^{1}_{\tau} )
\end{CD}\end{equation}
where $\beta'$ is obtained by composition. 

Using the isomorphisms \eqref{Riso}, we interpret $\beta'$ as a map on the cohomology of the mirror
\[  \beta':  H^{1}(B, R^1 \check f_{\ast} \Z_2) \rightarrow H^{2}(B, R^2 \check f_{\ast} \Z_2).\]
As explained in \S \ref{section_class},  the twist $\tau$ is a class in $H^1(B, j_* \Lambda^*)$. Notice that 
\[ H^1(B, j_* \Lambda^*) = H^1(B, R^1 \check f_{\ast} \Z). \]
This allows us to interpret $\tau$ as the class of a line bundle $L_{\tau}$ on $\check X$ (indeed, it is conjectured that Lagrangian sections are mirror to line bundles).  The assumption that $\tau$ is not a square (see \S \ref{twist_real}) implies that $L_{\tau}$ is not zero after reduction modulo two. Therefore, our assumption is that the twist $\tau$ is such that the mirror line bundle $L_{\tau}$ satisfies
\[ 0 \neq L_{\tau} \in H^1(B, R^1 \check f_{\ast} \Z_2). \]

We can now state our main result

\begin{thm} \label{beta_MS}
The map $\beta': H^{1}(B, R^1 \check f_{\ast} \Z_2) \rightarrow H^{2}(B, R^2 \check f_{\ast} \Z_2)$, related to the connecting homomorphism in the long exact sequence \eqref{connecting2} via diagram \eqref{beta}, coincides with the map 
\[ \begin{split} 
            S_{\tau} : H^{1}(B, R^1 \check f_{\ast} \Z_2) & \longrightarrow H^{2}(B, R^2 \check f_{\ast} \Z_2) \\
                                                                                            D  & \longmapsto D^2 +  DL_{\tau}
   \end{split}
            \]
\end{thm}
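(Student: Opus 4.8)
The plan is to compute the composite $\beta'$ directly at the level of \v{C}ech cochains, reducing everything to a single local computation inside one affine fibre. I fix a sufficiently fine good cover $\{U_i\}$ of $B$ over which $\tau$ admits Lagrangian lifts $\tilde\tau_i$ (as in \S\ref{section_class}) and all the relevant sheaves are acyclic on intersections. Over $B_0$ the fibre $S_b = \pi_\tau^{-1}(b)$ is, by \S\ref{sigmatau}, an affine space modelled on $V := \Lambda^\ast\otimes\Z_2$, and the lift $\tilde\tau_i$ singles out the base point $o_i = -\tfrac12\tilde\tau_i \bmod \Lambda^\ast$. The crucial bookkeeping fact is that on an overlap the two base points differ by $o_i - o_k = \tfrac12(\tilde\tau_k - \tilde\tau_i) \equiv [\tilde\tau_i - \tilde\tau_k] \in V$, so that $\{o_i - o_k\}$ is exactly the \v{C}ech cocycle $(L_\tau)_{ik}$ representing $L_\tau$ under $H^1(B, j_\ast\Lambda^\ast) = H^1(B, R^1\check f_\ast\Z)$ and its reduction mod $2$. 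This is the mechanism by which the twist enters linearly.

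Next I would run the cocycle chase through diagram \eqref{beta}. Represent $D \in H^1(B, R^2 f_\ast\Z_2)$ by a cocycle $\{D_{ik}\}$ with $D_{ik}\in V$ via \eqref{R2}--\eqref{R2-1}; its image in $H^1(B, \mathcal L^2_\tau)$ under \eqref{short_seq3} is $\{[\delta_{Z_{D_{ik}}}]\}$, where $[\delta_{Z_v}]$ is the class in \eqref{seq_aff} of the delta function of a line in direction $v$. To apply the connecting map $\beta$ of \eqref{short_seq1} I lift each entry to an honest function in $(\pi_{\tau\ast}\Z_2)(U_{ik})$ by choosing the specific line through the base point $o_i$ of the first index, namely $\tilde c_{ik} = \delta_{o_i} + \delta_{o_i + D_{ik}}$. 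Since $\{D_{ik}\}$ is a cocycle, the \v{C}ech coboundary $(\delta\tilde c)_{ijk} = \tilde c_{ij} + \tilde c_{jk} + \tilde c_{ik}$ lands in $\mathcal L^1_\tau = \operatorname{Aff}(S)$, and projecting by \eqref{short_seq2} to its linear part in $R^1 f_\ast\Z_2$ yields a cocycle representing $\beta'(D)$.

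The crux is the resulting local identity. Writing $v_1 = D_{ij}$, $v_2 = D_{jk}$, $v_3 = D_{ik} = v_1+v_2$, and $a = (L_\tau)_{ij} = o_i - o_j$, the two $\delta_{o_i}$ terms cancel and one is left with the indicator of the four points $\{o_i + v_1,\ o_i + a,\ o_i + a + v_2,\ o_i + v_1 + v_2\}$. I would then check that these form a codimension one affine plane $W$ with direction $\spn\{a+v_1,\, v_2\}$, so the coboundary is the affine function $\delta_W$ whose linear part is the functional with kernel $\spn\{a+v_1,\,v_2\}$. Under the volume-form identification $\Lambda\otimes\Z_2 \cong \wedge^2(\Lambda^\ast\otimes\Z_2)$ realising $R^1 f_\ast\Z_2 \cong R^2\check f_\ast\Z_2$ of \eqref{Riso}, this functional is precisely $(a+v_1)\wedge v_2 = v_1\wedge v_2 + a\wedge v_2$. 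Recognising $\{v_1\wedge v_2\} = \{D_{ij}\wedge D_{jk}\}$ as the \v{C}ech representative of $D\cup D = D^2$ and $\{a\wedge v_2\} = \{(L_\tau)_{ij}\wedge D_{jk}\}$ as that of $L_\tau\cup D = DL_\tau$ (with the wedge product as the fibrewise cup product), I obtain $\beta'(D) = D^2 + DL_\tau = S_\tau(D)$. Setting $L_\tau = 0$ recovers the squaring map of Theorem \ref{arguz_prince}.

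I expect the main obstacle to be not the open-dense computation above but its global validity across the discriminant locus $\Delta$. The sheaves $\mathcal L^i_\tau$, $R^p f_\ast\Z_2$ and their mirror identifications are pinned down on $B_0$ and extended by simplicity (the $j_\ast$ pushforward), so I must argue that the \v{C}ech representative built over $B_0$ computes the honest global connecting homomorphism: that the cover can be adapted to $\Delta$, that the chosen lifts $\tilde c_{ik}$ extend across the degenerate fibres, and that these fibres contribute nothing new. This is the technical heart that parallels, and must upgrade, the untwisted analysis of Arguz and Prince. A secondary point to verify carefully is that the affine (rather than linear) local model of $S$ remains compatible with the inclusion \eqref{short_seq3} and the projection \eqref{short_seq2} near $\Delta$, so that the linear-part map really does realise the mirror cup product.
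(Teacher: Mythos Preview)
Your overall strategy---run the \v{C}ech connecting homomorphism through diagram \eqref{beta} by lifting $[\delta_{Z_{D_{ij}}}]$ to an explicit function and reading off the linear part of the coboundary---is exactly the paper's. The execution, however, is tidier than the paper's. The paper lifts $[\delta_{Z_{D_{ij}}}]$ to the line through $o_i$, $[\delta_{Z_{D_{jk}}}]$ to the line through $o_j$, and $[\delta_{Z_{D_{ki}}}]$ to the line through $o_k$, obtaining a sum of six delta functions with no automatic cancellation; it then sets $e_1=D_{jk}$, $e_2=D_{ki}$, $f_1=\tau_{ij}$, $f_2=\tau_{ki}$ and carries out a four-case geometric analysis according to whether $e_1,e_2$ are dependent and how the three lines $Z_0,Z_1,Z_2$ sit relative to one another. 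Your choice of lift (using the first index $o_i$ also in $\tilde c_{ik}$) forces two $\delta_{o_i}$'s to cancel and leaves exactly the four points $o_i+v_1,\,o_i+a,\,o_i+a+v_2,\,o_i+v_1+v_2$, whose pairwise differences visibly lie in $\spn\{a+v_1,v_2\}$; this collapses the paper's case analysis into the single identity $(a+v_1)\wedge v_2 = D_{ij}\wedge D_{jk} + (L_\tau)_{ij}\wedge D_{jk}$, which is the standard \v{C}ech representative of $D^2+DL_\tau$. So your argument is correct and buys a cleaner proof; what the paper's case analysis buys is an explicit geometric picture of the three lines, which is not logically needed.

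Two small points to tighten. First, you should say a word about the degenerate case: when $a+v_1$ and $v_2$ are linearly dependent, the four points are not a full $2$-plane, but one checks immediately that the four delta functions then cancel in pairs, so the coboundary is $0$, and correspondingly $(a+v_1)\wedge v_2=0$. Second, your worry about the discriminant is not the technical heart here. Since $\Delta$ is a graph in a $3$-manifold (codimension two), one simply chooses the Leray cover so that all triple intersections $U_{ijk}$ miss $\Delta$; this is exactly what the paper does in its first sentence, and it suffices because the sheaves are defined on all of $B$ by $j_\ast$ and simplicity, so the \v{C}ech computation over such a cover already computes the honest global connecting map. No separate analysis near singular fibres is required.
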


\begin{proof} The proof is similar to the proof of Theorem  \ref{arguz_prince} in \cite{arg_princ:realCY}. Fix an open covering $\mathfrak U = \{ \mathcal U_i \}_{i \in I}$ which is Leray for all the sheaves and such that all triple intersections do not intersect the discriminant $\Delta$.  We will denote multiple intersections of open sets by
\[ U_{i_0, \ldots, i_k} = U_{i_0} \cap \ldots \cap U_{i_k}. \]
The cup product in Cech cohomology has the following description. Let $\alpha \in H^p(B, \mathcal F)$ and $\beta \in H^q(B, \mathcal G)$ then the cup product $\alpha \cup \beta \in H^{p+q}(B, \mathcal F \otimes G)$ is represented by the cochain
\[ (\alpha \cup \beta)_{i_0 \ldots i_{p+q}} = \sum_{r=0}^{p+q} \alpha_{i_r, \ldots, i_{r+p}} \otimes \beta_{i_{r+p}, \ldots, i_{r+p+q}} \]
where we have chosen cocycles representing $\alpha$ and $\beta$. The indices in this formula should be interpreted cyclically.  

In the following, when we take a local section of $\Lambda^* \otimes \Z_2$ and denote it by $\lambda$, we will mean that $\lambda \in \Lambda^*$ reduced mod $2$, i.e. $\lambda$ will be short for $\lambda \otimes 1$.  On the other hand we can also identify
\[ \Lambda^* \otimes \Z_2 = \tfrac{1}{2} \Lambda^* \ \mod \Lambda^* \]
and therefore we may identify $\lambda$ with the point $\tfrac{1}{2} \lambda$ in the fibre $f^{-1}(b)$. 
Take a class $D \in H^{1}(B, R^1 \check f_{\ast} \Z_2)$ represented by a $1$-cycle $\{ U_{ij}, D_{ij} \}$.  The twisting cycle $L_{\tau}  \in H^{1}(B, R^1 \check f_{\ast} \Z_2)$ is represented by $\{ U_{ij}, \tau_{ij} \}$ as described in \S \ref{section_class}.  The product $D^2 + DL_{\tau}$ is then represented by the $2$-cycle
\begin{equation*} 
	\begin{split}
           (D^2 + DL_{\tau})_{ijk} = D_{ij} \wedge & (D_{jk} + \tau_{jk}) + D_{jk} \wedge (D_{ki} + \tau_{ki})  \\
                                                                         & + D_{ki} \wedge (D_{ij} + \tau_{ij}).
         \end{split}
\end{equation*}
We now compare this cycle with $\beta'(D)$. First, we need to view $D$ as a class in $H^{1}(B, \mathcal L^{2}_{\tau})$.
 As described in \S \ref{seq:proof}, $\{ U_{ij}, D_{ij} \}$ is sent to the cycle $\bar D = \{ U_{ij}, [\delta_{Z_{D_{ij}}}] \}$, where $Z_{D_{ij}}$ is a line with direction $D_{ij}$ if $D_{ij} \neq 0$ or the empty set if $D_{ij} =0$.  Now we need a cochain $\Gamma \in C^1(\mathfrak U, \pi_{\tau *}\Z_2)$ representing $\bar D$. Using the local description of $\Sigma_{\tau}|_{U_i}$ given in \S \ref{sigmatau}, we can represent $Z_{D_{ij}}$ as the line through the point $-\tfrac{\tilde \tau_i}{2}$ and direction $D_{ij}$. Therefore we lift $[\delta_{Z_{D_{ij}}}]$ to the map 
\[ \Gamma_{ij} = \delta_{Z_{D_{ij}}} = \delta_{-\tfrac{\tilde \tau_i}{2}} + \delta_{-\tfrac{\tilde \tau_i + D_{ij}}{2}} \]
Then we consider $\partial \Gamma \in C^2(\mathfrak U, \pi_{\tau *}\Z_2)$. On $U_{ijk}$ we have
\begin{equation}
 \begin{split}
     (\partial \Gamma)_{ijk}  & = \delta_{Z_{D_{ij}}} + \delta_{Z_{D_{jk}}}  + \delta_{Z_{D_{ki}}} = \\ 
                                          & = \delta_{-\tfrac{\tilde \tau_i}{2}} + \delta_{-\tfrac{\tilde \tau_i + D_{ij}}{2}}  \\
                                          & + \delta_{-\tfrac{\tilde \tau_j}{2}} + \delta_{-\tfrac{\tilde \tau_j + D_{jk}}{2}} \\
                                          & + \delta_{-\tfrac{\tilde \tau_k}{2}} + \delta_{-\tfrac{\tilde \tau_k + D_{ki}}{2}}
   \end{split} 
\end{equation} 
The next steps consist first in describing $(\partial \Gamma)_{ijk}$ as an affine function $\alpha_{ijk}$ on $\pi_{\tau}^{-1}(b)$. Thus giving a cycle $\alpha \in H^2(B, \mathcal L^{1}_{\tau})$. Notice that $\beta (\bar D) = \alpha $. Then we need to take the linear part $\beta_{ijk} = \operatorname{Lin}(\alpha_{ijk})$ so that $\beta'(D) = \{ U_{ijk}, \beta_{ijk} \}$. Then we compare $\beta_{ijk}$ with $(D^2 + DL_{\tau})_{ijk}$.

Let us identify $\pi_{\tau}^{-1}(b)$ with the vector space $V = \Lambda^* \otimes \Z_2$ by declaring the point $-\tfrac{\tilde \tau_i}{2}$ to be the origin. Moreover, let us denote by $Z_0, Z_1, Z_2$ respectively the sets $Z_{D_{ij}}$, $Z_{D_{jk}}$ and $Z_{D_{ki}}$.  Let
\[ e_1 = D_{jk}, \quad e_2 = D_{ki}, \quad f_1 = \tau_{ij}, \quad f_2 = \tau_{ki}. \]
The cocycle condition implies $D_{ij} = e_1+e_2$ and $\tau_{jk} = f_1 + f_2$. It is then easy to see that 
\[ \begin{split}
     \alpha_{ijk} = (\partial \Gamma)_{ijk}  & = \delta_{Z_0} + \delta_{Z_1}  + \delta_{Z_2} = \\ 
                                          & = \delta_{0} + \delta_{e_1+ e_2}  \\
                                          & + \delta_{f_1} + \delta_{f_1+e_1} \\
                                          & + \delta_{f_2} + \delta_{f_2+e_2}
   \end{split} \]
On the other hand we have
\begin{equation} 
    \begin{split} 
              (D^2 + DL_{\tau})_{ijk} & = (e_1 + e_2) \wedge  (e_1+ f_1+f_2) + e_1 \wedge (e_2 + f_2) + \\
                                                   & \ \ \ \ \ \ \ \ \ \ + e_2 \wedge (e_1+e_2 + f_1) = \\
                                                   & = e_1 \wedge e_2 + e_1 \wedge f_1 + e_2 \wedge f_2 
    \end{split}
\end{equation}
We study four different cases. 

\medskip

{\it Case 1: $e_1$ and $e_2$ are linearly dependent.} If $e_1 = e_2 = 0$, then both $(\partial \Gamma)_{ijk}$ and $(D^2 + DL_{\tau})_{ijk}$ are zero, in particular they match. Otherwise we may assume w.l.o.g. that $e_1 = e_2 \neq 0$. Then $Z_0 = \emptyset$ and $Z_1$ and $Z_2$ either coincide or are parallel.  In the first case $e_1$ and $f_1+f_2$ are linearly dependent, therefore $(\partial \Gamma)_{ijk}$ and $(D^2 + DL_{\tau})_{ijk}$ both vanish. Otherwise if $Z_1$ and $Z_2$ are parallel and distinct, then $e_1$ and $f_1 + f_2$ are linearly independent. In particular 
\[  (D^2 + DL_{\tau})_{ijk} = e_1 \wedge (f_1 +  f_2)
 \]
is a non-zero two form. On the other hand $\alpha_{ijk} = (\partial \Gamma)_{ijk} = \delta_{W}$, where $W$ is the unique $2$-plane containing the two lines. In particular $\alpha_{ijk}$ is a non constant affine function. Let $e_3$ be a third vector so that $\{e_1, f_1 + f_2, e_3 \}$ forms a basis of $V$. Let $\{e_1^*, e_2^*, e_3^* \}$ form the dual basis of $V^* = \Lambda \otimes \Z_2$.  Taking the linear part of $\alpha_{ijk}$ we have
\[ \beta_{ijk} = \operatorname{Lin} (\alpha_{ijk}) = e_3^*. \]
Consider $\Omega = e_1 \wedge (f_1 + f_2) \wedge e_3$, which coincides with the global $3$-form on $B$.  Contracting $\Omega$ with $e_3^*$ gives precisely $e_1 \wedge (f_1 + f_2)$. Therefore, after applying the isomorphism  \eqref{Riso} we have
\[  \beta_{ijk} = (D^2 + DL_{\tau})_{ijk}. \] 

\medskip

We now assume $e_1$ and $e_2$ are linearly independent. Let $e_3$ be a third vector so that $\{e_1, e_2, e_3 \}$ form a basis of $V$ and let $\{e_1^*, e_2^*, e_3^*\}$ be the dual basis of $V^* = \Lambda \otimes \Z_2$. As above $\Omega = e_1 \wedge e_2 \wedge e_3$ coincides with the global three form on $B$.  We discuss the following three cases. 

\medskip

{ \it Case 2: $Z_0, Z_1, Z_2$ are coplanar and pass through the same point.} In this case $\alpha_{ijk} = (\partial \Gamma)_{ijk} = \delta_{W}$, where $W$ is the unique plane containing the three lines. We have 
\[ \beta_{ijk} = \operatorname{Lin}(\alpha_{ijk}) = e^*_3. \]
Let $q$ be the common point of the three lines. We must have $q=0$ or $q=e_1 + e_2$. Notice that $f_j = q + \epsilon_j e_j$, where $\epsilon_j = 1, 0$. Then 
\[ 
              (D^2 + DL_{\tau})_{ijk}  = e_1 \wedge e_2 + (e_1  + e_2) \wedge q = e_1 \wedge e_2.
\]
Since contracting $\Omega$ with $e_3^*$ gives $e_1 \wedge e_2$, we have $\beta_{ijk} = (D^2 + DL_{\tau})_{ijk}$. 
\medskip

{ \it Case 3: $Z_0, Z_1, Z_2$ are coplanar and intersect pairwise at three different points.} In this case, $\alpha_{ijk} =0$. Let $q = Z_0 \cap Z_1$. Then $f_1 = q + \epsilon_1 e_1$ and $f_2 = q + e_1 + \epsilon_2 e_2$.  Therefore 
\[ 
              (D^2 + DL_{\tau})_{ijk} = e_1 \wedge e_2 + e_1 \wedge q + e_2 \wedge (q+e_1) = (e_1+ e_2) \wedge q = 0.
  \]
So that $\beta_{ijk} = (D^2 + DL_{\tau})_{ijk}$.
\medskip

{ \it Case 4: $Z_1, Z_2$ are coplanar and $Z_0$ is disjoint from $Z_1$ and $Z_2$.}  Let $q = Z_1 \cap Z_2$. We must have that $q$ is linearly independent from $e_1$ and $e_2$, therefore we may assume that $e_3 = q$.  We have that  $\alpha_{ijk} = \delta_{W}$, where $W$ is the unique $2$-plane containing $Z_0$ and the points $e_1+e_3$ and $e_2+e_3$. It can be easily seen that
\[ \beta_{ijk} = \operatorname{Lin}(\alpha_{ijk}) = e_1^* + e_2^* + e_3^* \]
On the other hand, we have $f_j = e_3+\epsilon_j e_j$, which gives
\[ 
     (D^2 + DL_{\tau})_{ijk}  =  e_1 \wedge e_2 + e_1 \wedge e_3 + e_2 \wedge e_3. 
\]
This is precisely the two form obtained by contracting $\Omega$ with $e_1^* + e_2^* + e_3^*$.  Therefore $\beta_{ijk} = (D^2 + DL_{\tau})_{ijk}$ also in this case. 

\medskip

This concludes the proof. 
\end{proof}

\section{Connectedness and bounds on Betti numbers} \label{applications}
We will compute some consequences on the cohomology of $\Sigma_{\tau}$ with the assumption that the base $B$ is a homology $\Z_2$-sphere and that $H^1(X, \Z_2)= H^1(\check X, \Z_2)=0$. We will prove the following
\begin{thm} \label{applic}
With the above assumptions, if $\tau$ is a non-trivial twist, then $\Sigma_{\tau}$ is connected and its Betti numbers satisfy
\[ b_1(\Sigma_{\tau}, \Z_2) \leq \dim H^1(B, R^1f_*\Z_2) + \dim H^1(B, R^2f_*\Z_2) - 1. \]
where equality holds if and only if the connecting homomorphism $\beta$ in \eqref{connecting2} is zero.
If the integral cohomologies of $X$ and $\check X$ have no $\Z_2$ torsion, then
\[ b_q(\Sigma_{\tau}, \Z_2) \leq  h^{q, 3-q}(X) + h^{q,q}(X) - 1. \]
 \end{thm}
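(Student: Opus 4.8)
The plan is to read everything off the long exact sequence \eqref{connecting2}, for which I first need the cohomology of the two sheaves $\mathcal L^1_\tau$ and $\mathcal L^2_\tau$. Since $B$ is a $\Z_2$-homology sphere we have $H^1(B,\Z_2)=H^2(B,\Z_2)=0$ and $H^0(B,\Z_2)=H^3(B,\Z_2)=\Z_2$, while the hypotheses $H^1(X,\Z_2)=H^1(\check X,\Z_2)=0$ give $H^0(B,R^1f_\ast\Z_2)=H^0(B,R^2f_\ast\Z_2)=0$ (these are exactly the vanishings recorded in the $\Z_2$ version of the $E_2$ page in \S\ref{Leray}). Feeding these into the long exact sequence of \eqref{short_seq2} yields $H^0(B,\mathcal L^1_\tau)=\Z_2$ and $H^1(B,\mathcal L^1_\tau)\cong H^1(B,R^1f_\ast\Z_2)$.

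The crucial point, and the place where the twist genuinely enters, is the behaviour of \eqref{short_seq3}. I would argue that the connecting map $\partial\colon H^0(B,\Z_2)\to H^1(B,R^2f_\ast\Z_2)$ of its long exact sequence sends the generator to the extension class of \eqref{short_seq3}, and I claim this class equals $L_\tau$ under the identification $H^1(B,R^2f_\ast\Z_2)\cong H^1(B,R^1\check f_\ast\Z_2)$ of \eqref{Riso}. To see this I would use the local picture of \S\ref{sigmatau}: a splitting of \eqref{short_seq3} amounts to a consistent choice of class $[\delta_q]$, i.e. to a point $q_b\in\pi_\tau^{-1}(b)$ tracing out a section of $\pi_\tau$. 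Locally the natural basepoint is $-\tilde\tau_i/2$, and on overlaps two such basepoints differ by $\tfrac{1}{2}(\tilde\tau_i-\tilde\tau_k)\bmod\Lambda^\ast$, which is precisely the cocycle $\{U_{ik},\tau_{ik}\otimes 1\}$ representing $\tau$ reduced mod $2$, namely $L_\tau$. Since by hypothesis $0\neq L_\tau$, the extension is non-split, $\partial$ is injective, and hence $H^0(B,\mathcal L^2_\tau)=0$ while the same sequence gives $\dim H^1(B,\mathcal L^2_\tau)=\dim H^1(B,R^2f_\ast\Z_2)-1$.

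Connectedness is then immediate: the start of \eqref{connecting2} reads $0\to H^0(B,\mathcal L^1_\tau)\to H^0(\Sigma_\tau,\Z_2)\to H^0(B,\mathcal L^2_\tau)$, so with $H^0(B,\mathcal L^1_\tau)=\Z_2$ and $H^0(B,\mathcal L^2_\tau)=0$ we get $H^0(\Sigma_\tau,\Z_2)=\Z_2$. For the Betti bound I would extract from \eqref{connecting2} the exact sequence $0\to H^1(B,\mathcal L^1_\tau)\to H^1(\Sigma_\tau,\Z_2)\to H^1(B,\mathcal L^2_\tau)\xrightarrow{\beta}H^2(B,\mathcal L^1_\tau)$, where $H^0(B,\mathcal L^2_\tau)=0$ makes the first arrow injective. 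Hence $b_1(\Sigma_\tau,\Z_2)=\dim H^1(B,R^1f_\ast\Z_2)+\dim\ker\beta$, and since $\dim\ker\beta\le\dim H^1(B,\mathcal L^2_\tau)=\dim H^1(B,R^2f_\ast\Z_2)-1$ with equality precisely when $\beta=0$, the first inequality and its equality criterion follow.

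For the final inequality I would invoke the no-torsion identification $\dim H^p(B,R^qf_\ast\Z_2)=h^{p,q}(X)$ from Corollary \ref{cor2_seq}, which turns the $q=1$ estimate into $b_1(\Sigma_\tau,\Z_2)\le h^{1,2}(X)+h^{1,1}(X)-1$. The other values of $q$ then come for free: $\Sigma_\tau$ is a closed $3$-manifold, so $\Z_2$-Poincaré duality gives $b_0=b_3$ and $b_1=b_2$, while $b_0=1$ already matches $h^{0,3}(X)+h^{0,0}(X)-1=1$ (and likewise $q=3$), and the Calabi-Yau Hodge symmetries $h^{2,2}=h^{1,1}$, $h^{2,1}=h^{1,2}$ make the $q=2$ bound coincide with the $q=1$ one. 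The main obstacle is the second paragraph: correctly identifying the extension class of \eqref{short_seq3} with $L_\tau$, since this single input is what separates the twisted case (one component, non-split sequences) from the untwisted case of \cite{CBM:fixpnt} (two components, split sequences).
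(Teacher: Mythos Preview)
Your proof is correct and follows essentially the same route as the paper: compute $H^0$ and $H^1$ of $\mathcal L^1_\tau$ from \eqref{short_seq2}, identify the connecting map of \eqref{short_seq3} with the class of $\tau$ via the local basepoints $-\tilde\tau_i/2$ (the paper packages this as Lemmas \ref{cohL2} and \ref{cohL1}, proving $H^0(B,\mathcal L^2_\tau)=0$ by contradiction rather than by directly showing $\partial(1)=\tau\neq 0$, but the \v Cech computation is the same), and then read connectedness and the $b_1$ bound off \eqref{connecting2}. Your Poincar\'e duality argument for the remaining values of $q$ is a detail the paper leaves implicit.
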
 

Notice in particular that $\Sigma_{\tau}$ is never maximal, in fact we have 
\[ \sum b_j(\Sigma_{\tau}, \Z_2) \leq \sum b_j(X, \Z_2) - 4. \]
When this inequality is an equality, $\Sigma_{\tau}$ is called an $(M-2)$ real variety ($M$ stands for maximal). 

\subsection{Cohomology of $\mathcal L^2_\tau$} We prove the following 
\begin{lem} \label{cohL2} In the hypothesis of Theorem \ref{applic} we have
\[  
   \begin{split}
   & H^0(B, \mathcal L^2_\tau)  = 0 \\
   & H^1(B, \mathcal L^2_\tau) ) \cong \frac{H^1(B, R^2f_*\Z_2)}{\langle \tau \rangle} \\
   & H^2(B, \mathcal L^2_\tau)   \cong H^2(B, R^2f_*\Z_2) \\
   & H^3(B, \mathcal L^2_\tau) \cong \Z_2 
   \end{split}
 \]
\end{lem}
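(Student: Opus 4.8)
The plan is to run the long exact sequence in cohomology attached to the short exact sequence \eqref{short_seq3} and to insert the values of $H^q(B, R^2f_\ast\Z_2)$ and $H^q(B,\Z_2)$ already known from \S\ref{Leray}. Under the standing hypotheses that $B$ is a $\Z_2$-homology $3$-sphere, we have $H^0(B,\Z_2) = H^3(B,\Z_2) = \Z_2$ and $H^1(B,\Z_2) = H^2(B,\Z_2) = 0$, while the $E_2$ page of \S\ref{Leray} gives $H^0(B, R^2f_\ast\Z_2) = H^3(B, R^2f_\ast\Z_2) = 0$. Feeding these into the long exact sequence collapses it into the single nontrivial piece
\[ 0 \to H^0(B,\mathcal L^2_\tau) \to \Z_2 \xrightarrow{\ \partial\ } H^1(B, R^2f_\ast\Z_2) \to H^1(B,\mathcal L^2_\tau) \to 0, \]
together with the isomorphisms $H^2(B,\mathcal L^2_\tau)\cong H^2(B, R^2f_\ast\Z_2)$ and $H^3(B,\mathcal L^2_\tau)\cong H^3(B,\Z_2)\cong\Z_2$. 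The last two rows of the lemma are thus immediate, and everything reduces to identifying the connecting map $\partial$.

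I claim $\partial(1) = \tau$, where $\tau$ is viewed in $H^1(B, R^2f_\ast\Z_2)$ via \eqref{Riso} and \S\ref{section_class}. To see this I compute $\partial$ in Cech cohomology on a good cover $\{U_i\}$. The surjection $\mathcal L^2_\tau\to\Z_2$ of \eqref{short_seq3} is the one of \eqref{seq_aff}, sending the class $[\delta_q]$ of a point to $1$ and the class $[\delta_Z]$ of a line to $0$; so a lift of the constant section $1$ over $U_i$ is given by $[\delta_{q_i}]$, and using the local normal form $\Sigma_\tau|_{U_i} = -\tfrac{\tilde\tau_i}{2} + \tfrac12\Lambda^* \bmod \Lambda^*$ of \S\ref{sigmatau} I take the base point $q_i = -\tfrac{\tilde\tau_i}{2}$. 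On an overlap the obstruction cocycle is $[\delta_{q_i}] + [\delta_{q_j}]$. Since a $\Z_2$-affine line has exactly two points, $\delta_{q_i}+\delta_{q_j}$ is literally the indicator of the line $Z$ through $q_i$ and $q_j$, whose direction vector is $q_i - q_j = -\tfrac12(\tilde\tau_i-\tilde\tau_j) = -\tfrac12\tau_{ij}$. Under the identification $\Lambda^*\otimes\Z_2 = \tfrac12\Lambda^*\bmod\Lambda^*$ (so that $-\tfrac12\tau_{ij}$ corresponds to $\tau_{ij}\otimes 1$), together with $(R^2f_\ast\Z_2)_b\cong\Lambda^*\otimes\Z_2$ of \eqref{R2-1} and the inclusion $V\to L^2_S$ of \eqref{seq_aff}, this cocycle is exactly the mod $2$ reduction of $\{\tau_{ij}\}$, i.e. the class $\tau$. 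Hence $\partial(1)=\tau$.

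Granting this, $H^0(B,\mathcal L^2_\tau) = \ker\partial$ and $H^1(B,\mathcal L^2_\tau) = \operatorname{coker}\partial = H^1(B, R^2f_\ast\Z_2)/\langle\tau\rangle$. Since the twist is non-trivial, that is $0\neq\tau\in H^1(B, R^2f_\ast\Z_2)$ by the non-square hypothesis of \S\ref{twist_real}, the map $\partial\colon\Z_2\to H^1(B, R^2f_\ast\Z_2)$ is injective; therefore $H^0(B,\mathcal L^2_\tau)=0$ and $H^1(B,\mathcal L^2_\tau)\cong H^1(B, R^2f_\ast\Z_2)/\langle\tau\rangle$, completing all four computations.

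I expect the delicate step to be the Cech bookkeeping of $\partial$ rather than the spectral-sequence input: one must check that the convenient base-point lifts $[\delta_{q_i}]$ are genuine sections of $\mathcal L^2_\tau$ over the $U_i$ (equivalently, that they extend across $\Delta\cap U_i$ in a monodromy-compatible way, or else arrange the cover so that this is automatic), and one must keep the normalization $q_i=-\tilde\tau_i/2$ consistent with the sign and halving conventions built into \eqref{R2-1} and \eqref{seq_aff}. Once these identifications are pinned down, the equality $\partial(1)=\tau$ --- and hence the whole lemma --- follows formally.
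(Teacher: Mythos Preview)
Your argument is correct and follows the same route as the paper: run the long exact sequence of \eqref{short_seq3}, use the $\Z_2$-sphere hypothesis and the vanishing of $H^0$ and $H^3$ of $R^2f_\ast\Z_2$ from \S\ref{Leray} to isolate the single nontrivial piece, and then identify the connecting map $\partial(1)=\tau$ by lifting $1$ to the local sections $[\delta_{-\tilde\tau_i/2}]$ and computing the Cech coboundary. The only cosmetic difference is that the paper packages the local lifts through an explicit trivialization $\phi_i:(\Lambda^*\otimes\Z_2)\oplus\Z_2\to\mathcal L^2_\tau|_{U_i}$ and first argues $H^0(B,\mathcal L^2_\tau)=0$ by a contradiction (a global section hitting $1$ would force $\tau_{ij}=v_j-v_i$) before computing $\partial$; your single computation of $\partial(1)=\tau$ yields both facts at once and is arguably cleaner.
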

\begin{proof} 
It follows from the discussion in \S \ref{Leray} that
\[  
   \begin{split}
         & H^{0}(B, R^2f_*\Z_2) \cong H^{0}(B, R^1\check f_*\Z_2) = 0  \\
         & H^3(B, R^2f_*\Z_2) = 0 
     \end{split}
 \]
Moreover 
\[  H^{0}(B, \Z_2) =  H^{3}(B, \Z_2) = \Z_2, \quad H^{1}( B, \Z_2)  = 0 \]
since $B$ is a homology $\Z_2$ sphere. 
Hence the long exact sequence associated to  \eqref{short_seq3} splits as follows
\begin{equation}  \label{l2:coho}
{\small \begin{split}
         \ & 0  \longrightarrow H^{0}(B, \mathcal L^2_\tau)   \longrightarrow   \Z_2  \longrightarrow  H^{1}( B, R^2f_*\Z_2) \longrightarrow H^{1}(B, \mathcal L^2_\tau)   \longrightarrow  0 \\
    \ &    0  \longrightarrow  H^{2}( B, R^2f_*\Z_2) \longrightarrow H^{2}(B, \mathcal L^2_\tau) \longrightarrow 0 \\
    \ &    0  \longrightarrow H^{3}(B, \mathcal L^2_\tau)  \longrightarrow \Z_2 \longrightarrow 0 
\end{split} }
\end{equation}

 The last two lines give the last two statements of the lemma. In the first line we have two possibilities, either $H^0(B, \mathcal L^2_{\tau}) = 0$ or  $H^0(B, \mathcal L^2_{\tau}) \cong H^0(B, \Z_2) \cong \Z_2$. Let us prove that the former holds. 
 Take some covering $\mathfrak U=\{U_i \}$ over which the cycle $\tau$ and $\Sigma_{\tau}$ can be described as in \S \ref{section_class}. 
 Then, over each $U_i$ we have identifications
 \[ (\Lambda^* \otimes \Z_2) \oplus \Z_2 \stackrel{\phi_i}{\longrightarrow} \mathcal L^2|_{U_i}. \]
 In fact for every non-zero $v \in  (\Lambda^* \otimes \Z_2)$ let $Z_v$ be the line with direction $v$ and passing through $- \tfrac{\tilde{\tau}_i}{2}$. Then we define
 \[ \phi_i(v, \epsilon) = \left[  \delta_{Z_v} + \epsilon \delta_{- \tfrac{\tilde{\tau}_i}{2}} \right]. \]
 It is then easy to check that over $U_{ij}$
 \[ \phi_j^{-1} \circ \phi_i(v, \epsilon) = (v + \epsilon \tau_{ij}, \epsilon). \]
 Suppose by contradiction that $\mathcal L^2_\tau$ has a non-trivial section $\alpha$ which is mapped to $1$ under the homomorphism $H^0(B, \mathcal L^2_{\tau}) \rightarrow \Z_2$. Then, locally with the above identifications $\phi_i$, we have 
 \[ \alpha_i = \alpha|_{U_i} = (v_i, 1) \]
 for some local section $v_i$ of $\Lambda^* \otimes \Z_2$. But since $\alpha$ is a section, we must have that on $U_{ij}$ 
 \[ (v_i + \tau_{ij} , 1) = (v_j, 1). \]
This implies that $\tau_{ij} = v_j - v_i$, i.e. that $\tau$ is the trivial class, contradicting our assumption. 

The first line of \eqref{l2:coho} becomes
\[     0  \longrightarrow \Z_2 \longrightarrow  H^{1}( B, R^2f_*\Z_2) \longrightarrow H^{1}(B, \mathcal L^2_\tau)   \longrightarrow   0.
\]
We now prove that image of $\Z_2$ inside $H^{1}( B, R^2f_*\Z_2)$ is generated by $\tau$. Given $1 \in \Z_2 = H^0(B, \Z_2)$, we first lift it to a cochain  $\gamma \in C^0(\mathfrak U, \mathcal L^2_\tau)$. Then $\partial \gamma$ comes from a cocycle $\lambda$ in $C^1( \mathfrak U, R^2f_*\Z_2)$ whose class is the image of $1$. We can define $\gamma$ on each $U_i$ by
\[ \phi_i(\gamma_i )= (0, 1). \]
Now we have 
\[  \phi_j(\gamma_i - \gamma_j) = \phi_i(\gamma_i - \gamma_j) = (\tau_{ij}, 0). \]
Therefore the cocycle $\lambda$ coincides with $\tau$. This proves the second isomorphism in this lemma. 
\end{proof}

\subsection{Cohomology of $\mathcal L^1_\tau$}
We can do a similar analysis of the cohomology of $\mathcal L^1_\tau$. 
\begin{lem} \label{cohL1} In the hypothesis of Theorem \ref{applic} we have that
\[  
   \begin{split}
      & H^0(B, \mathcal L^1_\tau)  = \Z_2 \\
    & H^1(B, \mathcal L^1_\tau) \cong H^1(B, R^1f_*\Z_2) 
   \end{split}
 \]
 \end{lem}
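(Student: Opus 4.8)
The plan is to apply the long exact sequence in cohomology to the short exact sequence \eqref{short_seq2}, precisely as was done for $\mathcal L^2_\tau$ in the proof of Lemma \ref{cohL2}. The two relevant segments are
\[
0 \to H^0(B, \Z_2) \to H^0(B, \mathcal L^1_\tau) \to H^0(B, R^1 f_* \Z_2)
\]
and
\[
0 \to H^1(B, \Z_2) \to H^1(B, \mathcal L^1_\tau) \to H^1(B, R^1 f_* \Z_2) \to H^2(B, \Z_2),
\]
spliced together by the connecting map $H^0(B, R^1 f_* \Z_2) \to H^1(B, \Z_2)$.

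First I would collect the vanishing statements. Since $B$ is a homology $\Z_2$-sphere, $H^0(B, \Z_2) = \Z_2$ while $H^1(B, \Z_2) = H^2(B, \Z_2) = 0$. For the term $H^0(B, R^1 f_* \Z_2)$ I would appeal to the degeneration of the Leray spectral sequence at $E_2$ discussed in \S\ref{Leray}: because $H^1(B, \Z_2) = 0$, the only surviving contribution to $H^1(X, \Z_2)$ is $H^0(B, R^1 f_* \Z_2)$, so the hypothesis $H^1(X, \Z_2) = 0$ forces $H^0(B, R^1 f_* \Z_2) = 0$.

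Substituting these, the first segment collapses to $0 \to \Z_2 \to H^0(B, \mathcal L^1_\tau) \to 0$, giving $H^0(B, \mathcal L^1_\tau) \cong \Z_2$; under the inclusion of constant maps in \eqref{short_seq2} this $\Z_2$ is generated by the global constant function $1$. With $H^1(B, \Z_2) = H^2(B, \Z_2) = 0$, the second segment collapses to $0 \to H^1(B, \mathcal L^1_\tau) \to H^1(B, R^1 f_* \Z_2) \to 0$, which is the desired isomorphism.

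The argument is purely formal once the vanishing terms are identified, so I expect no genuine obstacle; the only step needing justification is $H^0(B, R^1 f_* \Z_2) = 0$, which is exactly where the hypothesis $H^1(X, \Z_2) = 0$ (together with $B$ a homology sphere) is used. In contrast to the computation of $H^1(B, \mathcal L^2_\tau)$ in Lemma \ref{cohL2}, the twist $\tau$ plays no role here: the sequence \eqref{short_seq2} is canonical and independent of $\tau$, so neither $H^0$ nor $H^1$ of $\mathcal L^1_\tau$ detects the twisting.
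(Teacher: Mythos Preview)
Your argument is correct and is exactly the approach the paper takes: the paper's proof simply says both isomorphisms follow from the long exact sequence associated to \eqref{short_seq2}, and you have spelled out the details. Your remark that the twist $\tau$ plays no role here is also accurate, since only the outer terms of \eqref{short_seq2} and the hypotheses on $B$ and $H^1(X,\Z_2)$ enter the computation.
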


\begin{proof}  Both isomorphism follow from the long exact sequence associated to \eqref{short_seq2}. 
\end{proof}

\subsection{Proof of Theorem  \ref{applic}} We apply the isomorphisms of Lemmas \ref{cohL2} and \ref{cohL1} to the long exact sequence \eqref{connecting2}. The first line becomes 
\begin{equation}  \label{connected}
                    0  \longrightarrow  \Z_2  \longrightarrow H^{0}(\Sigma_{\tau}, \Z_2)   \longrightarrow   0.
\end{equation}
This proves that $\Sigma_{\tau}$ is connected.  The rest of \eqref{connecting2}  becomes
\begin{equation}  \label{connecting3}
\begin{split}
          0  \longrightarrow   \ & H^1(B, R^1f_*\Z_2) \longrightarrow H^{1}(\Sigma_{\tau}, \Z_2)   \longrightarrow   H^{1}(B,  \mathcal L^{2}_{\tau} ) \stackrel{\beta}{\longrightarrow}  \\
          \ & H^{2}(B,  \mathcal L^{1}_{\tau} )  \longrightarrow  H^{2}(\Sigma_{\tau}, \Z_2) \longrightarrow H^{2}(B,  \mathcal L^{2}_{\tau}) \longrightarrow \ldots,
\end{split}
\end{equation}
which gives
\[ 
\begin{split}
    b_1(\Sigma_{\tau}, \Z_2) & \leq \dim H^1(B, R^1f_*\Z_2) +  \dim H^{1}(B,  \mathcal L^{2}_{\tau} ) \\
                                            & = \dim H^1(B, R^1f_*\Z_2) + \dim H^1(B, R^2f_*\Z_2) - 1.
\end{split}
\]
Obviously the equality holds if and only if $\beta = 0$. If the integral cohomology of X has no $\Z_2$ torsion then the dimensions of the spaces on the righthand side equal the corresponding Hodge numbers.

\subsection{Topology of twisted real $K3$ surfaces} Assume that $B$ has dimension $2$ and that it is the affine base of a $K3$ surface.  Let $\Sigma_{\tau}$ be the real twisted $K3$ associated to some twist $\tau \in H^{1}(B, R^1 \check f_{\ast} \Z_2)$.

\begin{thm} If the twist is non trivial, then the real twisted $K3$ surface $\Sigma_{\tau}$ is connected and has genus $9$. 
\end{thm}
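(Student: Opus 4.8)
The plan is to mimic the three–dimensional arguments of Theorem \ref{applic} and Lemma \ref{cohL2}, now over the base $B=S^2$ of a K3 fibration, using the two short exact sequences available in dimension two, namely \eqref{short_seq2} and \eqref{L1dim2} (with $\pi$ replaced by $\pi_\tau$). Since $B$ is a $\Z_2$-homology $2$-sphere, $H^0(B,\Z_2)=H^2(B,\Z_2)=\Z_2$ and $H^1(B,\Z_2)=0$. First I would record the cohomology of $R^1f_\ast\Z_2$: running the (degenerate, by Gross) Leray spectral sequence of $f\colon X\to B$ against the $\Z_2$-Betti numbers of a K3 ($b_0=b_4=1$, $b_1=b_3=0$, $b_2=22$) forces
\[ H^0(B,R^1f_\ast\Z_2)=H^2(B,R^1f_\ast\Z_2)=0, \qquad \dim H^1(B,R^1f_\ast\Z_2)=20. \]

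\textbf{Connectedness.} Because $H^q(\Sigma_\tau,\Z_2)\cong H^q(B,\pi_{\tau\ast}\Z_2)$, I must show $H^0(B,\pi_{\tau\ast}\Z_2)=\Z_2$. From \eqref{short_seq2} and $H^0(B,R^1f_\ast\Z_2)=0$ one gets $H^0(B,\mathcal L^1_\tau)=\Z_2$, so by \eqref{L1dim2} the claim reduces to showing that the connecting map $\delta\colon H^0(B,\Z_2)\to H^1(B,\mathcal L^1_\tau)$ is injective. This is the exact analogue of the computation in Lemma \ref{cohL2} which sends $1$ to $\tau$: lifting the generator $1$ to the local delta functions $\delta_{-\tilde\tau_i/2}$ at the chosen origins of the fibres, $\delta(1)$ is represented by the cocycle $\{\delta_{\{-\tilde\tau_i/2,\,-\tilde\tau_j/2\}}\}$, whose linear part (image in $R^1f_\ast\Z_2$) is exactly $L_\tau$ under the two–dimensional self-duality $R^1f_\ast\Z_2\cong R^1\check f_\ast\Z_2$ of \eqref{Riso}. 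Since $L_\tau\neq 0$ by the non-square hypothesis, $\delta(1)\neq 0$, hence $H^0(\Sigma_\tau,\Z_2)=\Z_2$ and $\Sigma_\tau$ is connected. Equivalently, a nonzero element of the cokernel would be a global locally constant odd-parity function on $\Sigma_\tau$, i.e. a topological section fixed by $\iota_\tau$, which exists precisely when $L_\tau=0$.

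\textbf{Genus.} The Euler characteristic is insensitive to the extension problems: by additivity over \eqref{short_seq2} and \eqref{L1dim2},
\[ \chi(\Sigma_\tau)=\chi(B,\pi_{\tau\ast}\Z_2)=2\,\chi(B,\Z_2)+\chi(B,R^1f_\ast\Z_2)=2\cdot 2+(0-20+0)=-16. \]
As $\Sigma_\tau$ is connected and closed, $\Z_2$-Poincar\'e duality gives $b_0(\Sigma_\tau,\Z_2)=b_2(\Sigma_\tau,\Z_2)=1$, whence $b_1(\Sigma_\tau,\Z_2)=2-\chi(\Sigma_\tau)=18$. Finally $\Sigma_\tau$ is orientable, being the fixed locus of an anti-symplectic involution on a (symplectic) Calabi-Yau surface; therefore it is the orientable surface with $2g=b_1=18$, i.e. of genus $g=9$.

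\textbf{Main obstacle.} The delicate point is the connectedness step: precisely identifying $\delta(1)$ with $L_\tau$ under \eqref{Riso} (the two–dimensional self-duality $R^1f_\ast\cong R^1\check f_\ast$), so that the non-square hypothesis $L_\tau\neq0$ can be invoked, exactly as in Lemma \ref{cohL2}. The orientability input, while standard for real loci of Calabi-Yau manifolds, is the second ingredient one must not skip, since it is what upgrades the computation $b_1=18$ to ``genus $9$''.
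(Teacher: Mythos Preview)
Your argument is correct and runs parallel to the paper's. For connectedness you do exactly what the paper does: show that the connecting map $\delta\colon H^0(B,\Z_2)\to H^1(B,\mathcal L^1_\tau)$ (the paper calls it $\alpha$) is injective by computing that its image, pushed to $H^1(B,R^1f_\ast\Z_2)$ via \eqref{short_seq2}, is the class of the twist; since $H^1(B,\Z_2)=0$ this push-forward is injective, and nontriviality of $\tau$ gives $\delta(1)\neq 0$. For the genus you take a mild shortcut: instead of chasing dimensions through both long exact sequences as the paper does (tersely: ``the statement follows from the above sequences''), you use additivity of $\chi$ across \eqref{short_seq2} and \eqref{L1dim2} to get $\chi(\Sigma_\tau)=-16$, then $\Z_2$-Poincar\'e duality on the connected closed surface to obtain $b_1=18$. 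The two computations are equivalent.

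One point to tighten: your justification for orientability is not valid as stated. A Lagrangian in a symplectic $4$-manifold need not be orientable (e.g.\ $\R\PP^2\subset\C\PP^2$ is the fixed locus of an anti-symplectic involution), and the symplectic Calabi--Yau condition $c_1=0$ does not by itself rule out, say, a Klein bottle component (for which $w_1(TL)^2=0$). The clean argument available here is specific to the fibred situation: over $B_0$ the map $\pi_\tau$ is an unramified $4$-sheeted cover of the orientable surface $B_0$, so $\Sigma_\tau|_{B_0}$ is orientable; its complement $\pi_\tau^{-1}(\Delta)$ is a finite set of points (focus--focus fibres are finite), and removing finitely many points from a closed surface cannot change orientability. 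The paper does not spell this out either, but since you explicitly invoke orientability you should replace the ``anti-symplectic on Calabi--Yau'' claim with this covering argument.
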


\begin{proof} We use the sheaf $\mathcal L^1_{\tau}$ with the properties described in \S \ref{dimension2}.  The long exact sequence associated to \eqref{L1dim2} splits as
\[
\begin{split}
         0  \longrightarrow & H^{0}(B, \mathcal L^1_\tau)   \longrightarrow   H^0(\Sigma_\tau,  \Z_2) \longrightarrow  \Z_2 \stackrel{\alpha}{\longrightarrow}  \\
          \ & \longrightarrow H^{1}(B, \mathcal L^1_\tau)  \longrightarrow    H^1(\Sigma_\tau,  \Z_2)  \longrightarrow  0,
\end{split}
\]
and 
 \[ 0  \longrightarrow H^{2}(B, \mathcal L^1_\tau)   \longrightarrow   H^2(\Sigma_\tau,  \Z_2) \longrightarrow  \Z_2 \longrightarrow 0. \]
 
 The sequence \eqref{short_seq2} gives that $H^{0}(B, \mathcal L^1_{\tau}) \cong \Z_2$ and 
 \[ 0  \longrightarrow H^{1}(B, \mathcal L^1_\tau)   \longrightarrow   H^1(B,  R^1f_{\ast} \Z_2) \longrightarrow  \Z_2 \longrightarrow  H^{2}(B, \mathcal L^1_\tau) \longrightarrow 0. \]
 Let us prove that the homomorphism $\alpha$ in the first sequence is injective if and only if $\tau$ is non-trivial. The argument is similar to the proof of Lemma \ref{cohL2}. Take some covering $\mathfrak U=\{U_i \}$ over which the cycle $\tau$ and $\Sigma_{\tau}$ can be described as in \S \ref{section_class}. Then, over each $U_i$ we have identifications
 \[ \mathcal L^1_{\tau}|_{U_i} \stackrel{\phi_i}{\longrightarrow} (\Lambda \otimes \Z_2) \oplus \Z_2  . \]
 which map an affine function $\beta$ on $\Sigma_{\tau}|_{U_i}$ to $(v_{\beta}, \epsilon_{\beta})$, where $v_{\beta}$ is the linear part of $\beta$ and $\epsilon_{\beta} = \beta(- \tilde \tau_{U_i} / 2)$. It is easy to show that 
 \[  \phi_j \circ \phi_i^{-1}(v, \epsilon) = (v, \epsilon + v(\tau_{ij}) )\] 

Using the mirror symmetry isomorphism  \eqref{Riso}, the twist $\tau$ has a mirror $\check \tau \in  H^{1}(B, R^1 \check f_{\ast} \Z_2)$, which is represented by $\{ \check \tau_{ij} \}$, where $\check \tau_{ij} \in (\Lambda \otimes \Z_2)_{U_{ij}}$ is obtained by contracting $\tau_{ij}$ with a global integral $2$-form.  

 The generator $1$ of $\Z_2$ (i.e. the last term in the sequence \eqref{L1dim2}) can be represented on each $U_i$ as $[ \delta_{- \frac{\tilde \tau_{U_i}}{2}}]$. Then one can check that $\alpha(1)$ is represented by the $1$-cycle $\{ (\check \tau_{ij}, 1) \}$. This cycle is non-zero if and only if $\tau$ is non zero. Now the statement follows from the above sequences, where we use the fact that $\dim H^1(B,  R^1f_{\ast} \Z_2) = 20$.
\end{proof}

\section{The Renaudineau-Shaw spectral sequence} \label{rs:sq}
We will compare our results with the work \cite{ren_shaw:bound} by Renaudineau and Shaw, where they construct a spectral sequence computing the Betti numbers of real hypersurfaces in toric varieties arising from primitive patchworking.  In their case they consider the $n$-dimensional tropical hypersurface $X$, corresponding to the complex hypersurface $\C X$, and they compute the homology with $\Z_2$ coefficients of the real hypersurface $\R X$. 

\subsection{The results of Renaudineau-Shaw.} The patchworking data (i.e. the choice of signs) are encoded in a cosheaf $\mathcal{S}$ of $\Z_2$ modules over $X$, called the sign cosheaf. Its homology satisfies $H_q(X, \mathcal{S}) = H_q(\R X, \Z_2) $.  Then they construct a filtration of $\mathcal S$
\[ 0 = \mathcal K_{n+1} \subset \mathcal K_n \subset \ldots \subset \mathcal K_0 = \mathcal S\]
by cosheaves whose quotients
\[ \mathcal F_p = \frac{ \mathcal K_p}{\mathcal K_{p+1}} \]
are a $\Z_2$ version of the cosheaves defined by Itenberg, Katzarkov, Mikhalkin and Zharkov (IKMZ) in \cite{trop_hom}. The tropical homology groups of $X$ are defined as $H_q(X, \mathcal F_p)$. The above filtration induces a spectral sequence, converging to the homology of $\R X$, whose first page is given by $E^1_{q,p} = H_q(X, \mathcal F_p)$. The boundary morphisms are maps $\partial: H_{q+1}(X, \mathcal F_{p}) \rightarrow H_{q}(X, \mathcal F_{p+1})$.  In the original definition of \cite{trop_hom}, the cosheaves $\mathcal F_p$ are defined over $\Q$ and in this case IKMZ prove that 
\[ \dim(H_q(X, \mathcal F_p)) = h^{p,q}(\C X). \]
Under the assumption that the integral tropical homology of $X$ has no torsion, the spectral sequence implies that 
\begin{equation} \label{bounds:bh}
     b_q(\R X, \Z_2) \leq \begin{cases} h^{q,q}(\C X)  \ \text{if} \ q = n/2, \\ 
                                                          h^{q, n-q}(\C X) + h^{q,q}(\C X) \ \text{otherwise}. 
                                                          \end{cases}.
\end{equation}
It follows from \cite{a_r_s:lefsch} that indeed these inequalities hold when the ambient toric variety is smooth. This proves a generalization of the Itenberg conjecture \cite{itenberg:trophom_betti}. 
Renaudineau and Shaw also prove that $\R X$ is maximal if and only if the spectral sequence degenerates at the first page, i.e. if and only if all the boundary maps vanish. They conjecture that the spectral sequence degenerates at the second page.

\subsection{Comparison with our case.}  In our case, $B$ plays the role of the tropical variety $X$ and the sheaf $\pi_{\tau*} \Z_2$ is the replacement for the sign cosheaf $\mathcal S$. Notice that we use a sheaf because we compute cohomology, instead of homology. Let us define a filtration of $\pi_{\tau*} \Z_2$ analogous to the one described above. Let $V$ be an $n$-dimensional $\Z_2$-vector space and let $S_V = \operatorname{Maps}(V, \Z_2)$. We define the subspace $K^p \subset S_V$ as the set of maps which can be defined by a polynomial on $V$ of degree less or equal to $p$.  Then $K^0 \cong \Z^2$ is given by the constant maps, $K^1$ is the space of affine maps and it can be shown that $K^{n} = S_V$. Therefore we have a filtration 
\[ 0 \subset K^0 \subset K^1\subset .... \subset K^n = S_V \]
It can be shown (e.g. compare with Section 4 of \cite{ren_shaw:bound}) that 
\[ K^p / K^{p-1} \cong \bigwedge^p V^* \]
If $V$ is an affine space instead of a vector-space, choosing a point $V$ as the origin turns it into a vector space and thus we can still define the filtration on $\operatorname{Maps}(V, \Z_2)$. It can be shown that this filtration is independent of the chosen point. 

We have seen that at a smooth point $b \in B_0$,  $\pi_{\tau}^{-1}(b)$ is an affine space modeled on $\Lambda^*_b \otimes \Z_2$ and therefore we can define a filtration by sheaves on $\pi_{\tau \ast} \Z_2$, since $ (\pi_{\tau \ast} \Z_2)_b =  \operatorname{Maps}(\pi_{\tau}^{-1}(b), \Z_2)$:
\[ 0 \subset \mathcal K^1 \subset \mathcal K^2 \subset \ldots \subset \mathcal K^n = \pi_{\tau \ast}\Z_2. \]
Let us denote
\[ \mathcal F^p =  \mathcal K^p / \mathcal K^{p-1}. \]
Then we have 
\[ \mathcal F^p \cong \bigwedge^p \Lambda = R^p f_{\ast} \Z_2, \]
where the last equality follows from \eqref{Rlam}. Therefore the sheaves $R^p f_{\ast} \Z_2$ play the same role as the cosheaves $\mathcal F_p$ in the Renaudineau-Shaw sequence. The above filtration induces a spectral sequence computing the cohomology of $\Sigma_{\tau}$. The first page is $E^{1}_{q,p} = H^q( B, R^p f_{\ast} \Z_2)$ and the boundary maps are 
\[ \partial_{\tau}: H^q( B, R^{p+1} f_{\ast} \Z_2) \rightarrow H^{q+1}( B, R^p f_{\ast} \Z_2) \]

Let us now compare this spectral sequence with our sequences. We restrict to the case $n=3$, so that $\mathcal K^3 = \pi_{\tau*} \Z_2$. Notice that by definition 
\[ \mathcal L^1_{\tau} = \mathcal K^1. \] 
Moreover  the sequence \eqref{short_seq1} defining $\mathcal L^2_{\tau}$ implies that 
\[ \mathcal L^2_{\tau} = \mathcal K_3 / \mathcal K_1. \]
The sequence  \eqref{short_seq2} is the sequence 
\[ 0 \rightarrow \mathcal K^0 \rightarrow \mathcal K^1 \rightarrow \mathcal F^1 \rightarrow 0 \]
defining $\mathcal F^1$.  The sequence \eqref{short_seq3} corresponds to 
\begin{equation} \label{l2k}
  0 \rightarrow \mathcal F^2 \rightarrow \mathcal K^3 / \mathcal K^1 \rightarrow \mathcal F^3 \rightarrow 0.
\end{equation}

It is not difficult to show that the homomorphism $\beta'$ defined in \eqref{beta} coincides with the corresponding boundary map from the spectral sequence. Moreover the connecting homomorphism 
\[ H^0(B, \mathcal F^3) \rightarrow H^{1}(B, \mathcal F^2) \]
from sequence \eqref{l2k} is also a boundary map from the spectral sequence and it is equivalent to the connecting homomorphism 
\[ \Z_2 \rightarrow H^{1}(B, R^2 f_{\ast} \Z_2) \]
from \eqref{short_seq3}. In particular Lemma \ref{cohL2} shows that, with the given hypothesis, this map is always injective when the twist $\tau$ is non trivial. Therefore, in the hypothesis of \S \ref{applications}, the spectral sequence never degenerates at the first page when $\tau$ is non trivial. 

\subsection{Mirror symmetry} Let us now consider mirror symmetry. Applying the isomorphism \eqref{Riso}, the boundary homomorphisms for the first page of the spectral sequence become 
\[ \partial_{\tau}: H^q( B, R^{n-p-1} \check f_{\ast} \Z_2) \rightarrow H^{q+1}( B, R^{n-p} \check f_{\ast} \Z_2).\]
When $n=3$, Theorem \ref{beta_MS} and Lemma \ref{cohL2} explicitly describe these homomorphisms in the cases $(q,p)=(1,1)$ and $(q,p)=(0,2)$.   

\medskip

\noindent {\bf Question.} {\it Can we explicitly describe these homomorphisms for all $n, p$ and $q$?}

\medskip 

For instance, in three dimensions, there is one more case to compute in order to determine the spectral sequence, i.e. $(q,p)=(2,0)$. In fact, if the boundary map in this case is injective, then the spectral sequence degenerates at the second page. We were not able to find an explicit description of this case.

\section{A connected $(M-2)$-real quintic} \label{m-2:quintic}

In the case of the quintic threefold, with the torus fibration described by Gross in \cite{TMS},  Arguz-Prince \cite{arg_princ:realCY} computed the cohomology of the untwisted real quintic (i.e. $L_{\tau} =0$). They found that the map $\beta$ has rank $73$, and hence that $b_1(\Sigma) = 29$. In particular the untwisted real quintic is not maximal. Since also the twisted quintics are not maximal, as proved in Section \ref{applications}, none of the real quintics constructed in this way is maximal. The highest possible value of $b_1$ for our twisted real Calabi-Yau's is when $\beta_\tau = 0$. In this case, as proved in Section \ref{applications}, the real Calabi-Yau is an $(M-2)$-real variety. We prove the following
\begin{thm} \label{quasi_max} There exists a connected $(M-2)$ real twisted quintic $\Sigma_{\tau}$. In particular
\[ b_1(\Sigma_{\tau}, \Z_2) = 101. \]
\end{thm}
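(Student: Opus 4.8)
The plan is to reduce the statement, via the results of Sections~\ref{cnnctng} and~\ref{applications}, to a purely combinatorial problem about the triple intersection form of the mirror quintic $\check X$, and then to solve that problem using Gross' explicit computation in \cite{TMS}. First I would fix the affine quintic $B$ of \S\ref{aff_quint} and the associated fibration $f\colon X\to B$, so that $X$ is the quintic threefold, $B$ is a $\Z_2$-homology sphere with $H^1(X,\Z_2)=H^1(\check X,\Z_2)=0$, and the cohomologies of the quintic and its mirror are torsion free. Then Theorem~\ref{applic} applies: for any non-trivial twist $\tau$ it gives that $\Sigma_\tau$ is connected, that it is $(M-2)$ exactly when the connecting map $\beta$ of \eqref{connecting2} vanishes, and that in that case
\[ b_1(\Sigma_\tau,\Z_2)=\dim H^1(B,R^1f_*\Z_2)+\dim H^1(B,R^2f_*\Z_2)-1=h^{1,1}(X)+h^{2,1}(X)-1=1+101-1=101. \]
Thus everything reduces to producing a \emph{single} non-trivial twist $\tau$ with $\beta=0$.

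Next I would translate $\beta=0$ into a statement on the mirror. From the long exact sequences attached to \eqref{short_seq2} and \eqref{short_seq3}, together with Lemma~\ref{cohL2} and the fact that $B$ is a $\Z_2$-homology sphere (so $H^1(B,\Z_2)=H^2(B,\Z_2)=0$), the left vertical map in \eqref{beta} is surjective and the right vertical map is injective. Hence $\beta=0$ if and only if $\beta'=0$, and by Theorem~\ref{beta_MS} the map $\beta'$ is
\[ S_\tau\colon H^2(\check X,\Z_2)\longrightarrow H^4(\check X,\Z_2),\qquad D\longmapsto D^2+DL_\tau, \]
under the identifications $H^1(B,R^1\check f_*\Z_2)\cong H^2(\check X,\Z_2)$ and $H^2(B,R^2\check f_*\Z_2)\cong H^4(\check X,\Z_2)$ from \S\ref{Leray}. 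Since the cohomology is torsion free, the classification of \S\ref{section_class} reduced modulo $2$ shows that every class $L\in H^2(\check X,\Z_2)$ is of the form $L_\tau$ for some twist $\tau$, which is non-trivial as soon as $L\neq 0$. Therefore it suffices to exhibit a class $0\neq L\in H^2(\check X,\Z_2)$ such that $D^2+DL=0$ for all $D\in H^2(\check X,\Z_2)$.

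The heart of the argument is the construction of such an $L$. Using Poincar\'e duality over $\Z_2$ (again torsion free), the condition $D^2=DL$ in $H^4(\check X,\Z_2)$ is equivalent to the equalities of triple products $D^2E=DLE$ for all $D,E\in H^2(\check X,\Z_2)$. Because $2DD'\equiv 0$, the map $D\mapsto D^2$ is $\Z_2$-linear, so it is enough to solve, on a basis $\{D_a\}$ of $H^2(\check X,\Z_2)$, the linear system
\[ \sum_c \ell_c\,d_{abc}=d_{aab}\ \text{ for all }a,b,\qquad d_{abc}:=D_aD_bD_c\bmod 2, \]
for the unknowns in $L=\sum_c\ell_c D_c$. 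Here I would input Gross' explicit description of the triple intersection numbers $d_{abc}$ of the mirror quintic from \cite{TMS}, computed combinatorially from the affine quintic, and then exhibit an explicit $L$ solving the system (equivalently, check that the symmetric bilinear form $B(D,E)=D^2E$ lies in the image of cup multiplication $H^2(\check X,\Z_2)\to\Hom(H^2(\check X,\Z_2),H^4(\check X,\Z_2))$). I would also record that, since $\check X$ is Calabi-Yau, the Wu class $v_2=w_2=c_1\bmod 2$ vanishes, whence the Wu formula applied to $D\cup E$ gives $D^2E=DE^2$; this shows that $B$ is symmetric and hence that the system is at least consistent. Having produced $L=L_\tau$, Theorem~\ref{beta_MS} gives $\beta'=0$, hence $\beta=0$, and Theorem~\ref{applic} then yields a connected $(M-2)$ real quintic $\Sigma_\tau$ with $b_1(\Sigma_\tau,\Z_2)=101$.

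The main obstacle is precisely this last construction: the vanishing $S_\tau=0$ is not formal, since the image of cup multiplication is a very small subspace of the space of all symmetric bilinear forms on $H^2(\check X,\Z_2)$, and the Wu relation only yields symmetry of $B$, not its representability by a class $L$. Establishing representability genuinely requires the explicit combinatorial intersection form of \cite{TMS}; the remaining steps (connectedness, the value $b_1=101$, and the realization of $L$ by a non-trivial twist) are then immediate from the results already proved.
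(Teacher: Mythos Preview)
Your proposal is correct and follows exactly the paper's strategy: reduce via Theorems~\ref{applic} and~\ref{beta_MS} to finding a nonzero $L\in H^2(\check X,\Z_2)$ with $D^2+DL=0$ for all $D$, and then construct such an $L$ from Gross' explicit $\bmod\,2$ triple intersection numbers (Proposition~\ref{prop:trip_inters}). The paper carries out precisely the step you flag as the main obstacle, building $L$ by a case-by-case analysis of the local constraints $D_1^2D_2=LD_1D_2$ on each $2$-face and edge of $\partial P$ and exhibiting a global configuration (the ``empty face'' and ``arrow'' patterns of Figures~\ref{face_config}--\ref{total_config}) that satisfies all of them.
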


We will prove this by finding a divisor $L$ in the mirror quintic $\check X$ such that
\begin{equation} \label{condition_L}
            D^2 + DL = 0 \quad \forall D \in H^2(\check X, \Z_2).
\end{equation} 

\subsection{The mirror quintic} Consider the same simplex $P$ as in \S \ref{aff_quint}, i.e. the one with vertices 
\[ \begin{split}
        V_0=(-1, -1, & -1, -1),  \quad V_1= (4, -1, -1, -1), \quad  V_2= (-1, 4, -1, -1),  \\ 
        & V_3= ( -1, -1, 4, -1),  \quad V_4= (-1, -1, -1, 4 ). 
    \end{split} 
\]
It is a reflexive polytope with the origin as its only interior integral point. The fan whose cones are the cones over the faces of $P$ gives a singular toric variety $\check Y_{P}$. We can resolve $\check Y_{P}$ by taking a unimodular regular subdivision of the boundary $\partial P$ and take the associated fan, which gives a smooth toric variety $\check Y$. The mirror quintic $\check X$ is a smooth anti-canonical divisor in $\check Y$.  Let us take, as in Gross \cite{TMS}, a subdivision of $\partial P$ which on $2$-dimensional faces looks like Figure \ref{triang_2}.

\begin{figure}[!ht] 
\begin{center}
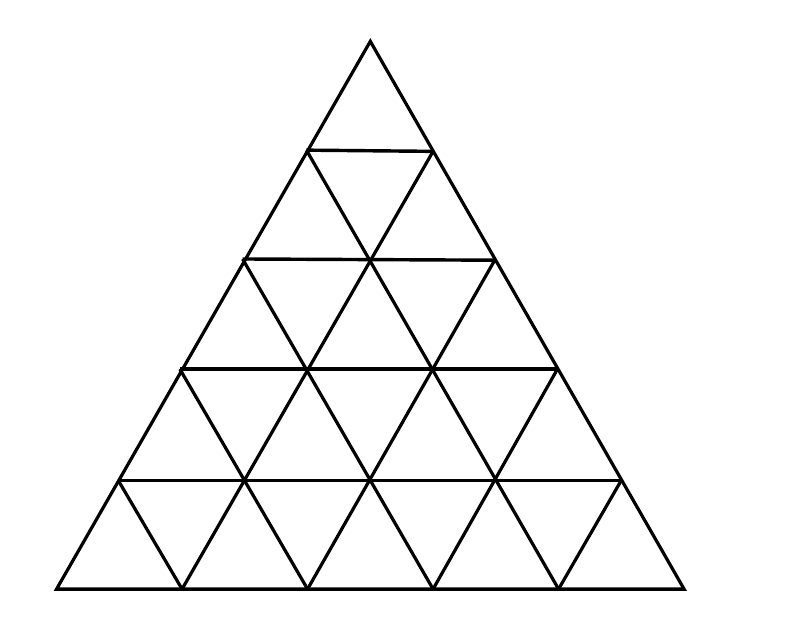
\caption{Triangulation of $2$-dimensional faces}
 \label{triang_2}
\end{center}
\end{figure}

\subsection{Divisors in the mirror quintic}
Each vertex in the subdivision of $\partial P$ corresponds to a one dimensional cone of the fan, hence to a toric divisor in $\check Y$. The divisors corresponding to vertices inside two dimensional faces of $\partial P$ are precisely the ones which have non trivial intersection with the mirror quintic $\check X$, therefore they correspond to non-zero divisors in $\check X$. Gross shows that these divisors generate $H^2(\check X, \Z)$, and hence also $H^2(\check X, \Z_2)$ (see Lemma 4.3 of \cite{TMS}). With some abuse of notation, we denote by $V_0, \ldots, V_4$ also the divisors corresponding to the vertices of $\partial P$. Moreover we denote by $E^{\ell}_{ij}$ the divisor corresponding to the $\ell$'th interior vertex along the edge from $V_i$ to $V_j$, where interior vertices of edges are numbered as in Figure \ref{triang_2}. The divisors in the interior of the $2$-face with vertices $V_i$, $V_j$ and $V_k$, numbered as in Figure \ref{triang_2}, are denoted by $F^{\ell}_{ijk}$.  In Proposition 4.2 of op. cit. Gross also computes triple intersection numbers $D_1 D_2 D_3$ between these divisors. We report here the $\mod 2$ version.  

\begin{prop} [Proposition 4.2 of \cite{TMS})]  \label{prop:trip_inters} The $\mod 2$ triple intersection numbers between the above divisors in $\check X$ are as follows
\begin{enumerate} 
\item $(V_i)^3 = (E^\ell_{ij})^3 = 1$ and $(F^{\ell}_{ijk})^3 = 0$.
 \item Given two distinct divisors $D_1$ and $D_2$ lying on the same $2$-face, then $(D_1)^2 D_2 = 1$ if and only if $D_1$ and $D_2$ are connected by one edge in the graph depicted in Figure \ref{trip_inters}. 
 \item Given three distinct divisors $D_1$, $D_2$, $D_3$ lying on the same $2$-face, then $D_1 D_2 D_3 = 1$ if and only if they are vertices of a two simplex in Figure \ref{triang_2}. 
\end{enumerate}
All other triple intersections are zero. 
\end{prop}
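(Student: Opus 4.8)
The plan is to derive the proposition entirely from Gross's integral computation of the triple intersection numbers in Proposition 4.2 of \cite{TMS} by reducing modulo $2$; the content of our statement is just a repackaging of the parities of Gross's integers in terms of the adjacency graph of Figure \ref{trip_inters} and the simplices of Figure \ref{triang_2}. Recall that $\check X$ is a smooth anticanonical hypersurface in the smooth toric variety $\check Y$ built from a unimodular (regular) subdivision of $\partial P$, and that each of $V_i$, $E^{\ell}_{ij}$, $F^{\ell}_{ijk}$ arises by restricting to $\check X$ the toric divisor of a ray of the fan.

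First I would treat part (3), the triple intersections of three distinct divisors, since it is the most transparent. For toric divisors, a product $D_1 D_2 D_3$ is supported on the intersection of the three divisors, which is nonempty only when the three rays lie in a common cone of the fan, i.e. only when the three vertices span a simplex of the subdivision. Because the subdivision is unimodular, whenever three vertices on a single $2$-face do span such a cone the resulting divisors meet $\check X$ transversally in a single point, so the intersection number is $1$, and otherwise it is $0$. This is precisely the statement that three divisors on the same face satisfy $D_1 D_2 D_3 = 1$ exactly when they are the vertices of a $2$-simplex in Figure \ref{triang_2}. Together with the remark that rays on distinct faces cannot span a smooth cone meeting $\check X$ in the correct dimension, this also yields the final ``all other triple intersections are zero'' clause.

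Next I would handle the self-intersections in (1) and the mixed terms $(D_1)^2 D_2$ in (2). These cannot be read off from transversality alone, since they involve the normal-bundle contributions that Gross computes explicitly; the step is therefore to take his integral formulas for $(V_i)^3$, $(E^{\ell}_{ij})^3$, $(F^{\ell}_{ijk})^3$ and for $(D_1)^2 D_2$ and record their residues modulo $2$. The assertion $(V_i)^3 = (E^{\ell}_{ij})^3 = 1$ together with $(F^{\ell}_{ijk})^3 = 0$ is then a direct parity check on his values, and the claim that $(D_1)^2 D_2 = 1$ exactly for pairs joined by an edge of Figure \ref{trip_inters} amounts to verifying that Gross's integer is odd precisely for the adjacencies drawn there.

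The main obstacle is this last verification: parts (1) and (2) are where the genuine arithmetic lives, because the self-intersection numbers depend on the fine combinatorics of the triangulation near the chosen ray rather than on a clean transversality criterion. The work is thus to follow the relevant portion of Gross's calculation for each of the three types of divisor $V_i$, $E^{\ell}_{ij}$, $F^{\ell}_{ijk}$ and confirm that the edges of Figure \ref{trip_inters} are exactly the pairs for which the integer $(D_1)^2 D_2$ is odd. Once this parity bookkeeping is complete, the proposition follows.
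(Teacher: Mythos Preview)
Your proposal is correct and matches the paper's treatment: the paper does not give an independent proof of this proposition but simply states it as the $\bmod\ 2$ reduction of Gross's integral computation in Proposition~4.2 of \cite{TMS}, which is exactly the derivation you outline. Your additional sketch of why part~(3) follows from unimodularity and transversality is a helpful gloss, but the essential content in both cases is the citation to Gross together with the parity bookkeeping.
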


\begin{figure}[!ht]
\begin{center}
\includegraphics{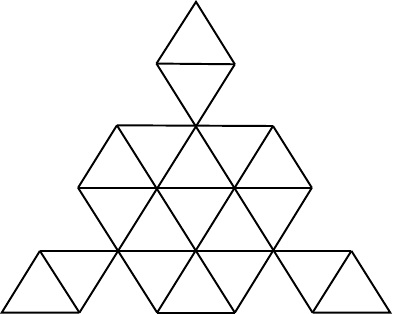}
\caption{Triple intersection graph}
\label{trip_inters}
\end{center}
\end{figure}

Let $S$ be the set of all divisors of type $V_i$, $E^{\ell}_{ij}$ or $F^{\ell}_{ijk}$. A general divisor in $H^2(\check X, \Z_2)$ can be written as
\[ L = \sum_{D \in S} \epsilon_D D \]
where $\epsilon_D \in \Z_2$. We can obviously view $L$ as a subset of $S$, where $D \in L$ if and only if $\epsilon_D =1$. We have that $L$ satisfies \eqref{condition_L} if and only if 
\[ D_1^2 D_2 = L D_1 D_2, \quad \forall D_1, D_2 \in S. \]

\subsection{Local configurations} It follows from Proposition \ref{prop:trip_inters} that if $D_1$ and $D_2$ do not belong to the same $2$-face then $D_1^2 D_2 = L D_1 D_2 = 0$. We now consider the following cases (and subcases)
\begin{enumerate}
\item $D_1$ and $D_2$ lie in the same $2$-face but not in the same edge of $\partial P$;
 \begin{itemize}
   \item[(1.1)] $D_1 \neq D_2$ and they are connected by an edge of the subdivision;
   \item[(1.2)] $D_1 \neq D_2$ and they are not connected by an edge of the subdivision;
   \item[(1.3)] $D_1 = D_2$ and it is in the interior of a $2$-face. 
 \end{itemize}
\item $D_1$ and $D_2$ lie inside the same edge. 
   \begin{itemize} 
           \item[(2.1)] $D_1 = D_2$;
           \item[(2.2)] $D_1 \neq D_2$ and they are connected by an edge in the graph of Figure \ref{trip_inters};
           \item[(2.3)] $D_1 \neq D_2$ and they are not connected by an edge in the graph of Figure \ref{trip_inters};
    \end{itemize}
\end{enumerate}

We now see how in the above cases the condition $D_1^2 D_2 = L D_1 D_2$ imposes certain local configurations on $L$.  

\medskip

\begin{figure}[!ht]
\begin{center}
\begingroup%
  \makeatletter%
  \providecommand\color[2][]{%
    \errmessage{(Inkscape) Color is used for the text in Inkscape, but the package 'color.sty' is not loaded}%
    \renewcommand\color[2][]{}%
  }%
  \providecommand\transparent[1]{%
    \errmessage{(Inkscape) Transparency is used (non-zero) for the text in Inkscape, but the package 'transparent.sty' is not loaded}%
    \renewcommand\transparent[1]{}%
  }%
  \providecommand\rotatebox[2]{#2}%
  \newcommand*\fsize{\dimexpr\f@size pt\relax}%
  \newcommand*\lineheight[1]{\fontsize{\fsize}{#1\fsize}\selectfont}%
  \ifx\svgwidth\undefined%
    \setlength{\unitlength}{97.29152649bp}%
    \ifx\svgscale\undefined%
      \relax%
    \else%
      \setlength{\unitlength}{\unitlength * \real{\svgscale}}%
    \fi%
  \else%
    \setlength{\unitlength}{\svgwidth}%
  \fi%
  \global\let\svgwidth\undefined%
  \global\let\svgscale\undefined%
  \makeatother%
  \begin{picture}(1,0.85915659)%
    \lineheight{1}%
    \setlength\tabcolsep{0pt}%
    \put(0,0){\includegraphics[width=\unitlength,page=1]{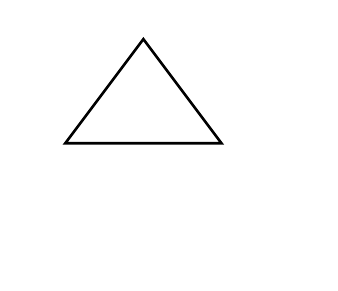}}%
    \put(-0.00777904,0.40778072){\color[rgb]{0,0,0}\makebox(0,0)[lt]{\lineheight{1.25}\smash{\begin{tabular}[t]{l}$D_1$\end{tabular}}}}%
    \put(0.66615316,0.40778072){\color[rgb]{0,0,0}\makebox(0,0)[lt]{\lineheight{1.25}\smash{\begin{tabular}[t]{l}$D_2$\end{tabular}}}}%
    \put(0.34251802,0.78006118){\color[rgb]{0,0,0}\makebox(0,0)[lt]{\lineheight{1.25}\smash{\begin{tabular}[t]{l}$D_3$\end{tabular}}}}%
    \put(0.35372623,0.01811765){\color[rgb]{0,0,0}\makebox(0,0)[lt]{\lineheight{1.25}\smash{\begin{tabular}[t]{l}$D_4$\end{tabular}}}}%
    \put(0,0){\includegraphics[width=\unitlength,page=2]{local_conf_1.pdf}}%
  \end{picture}%
\endgroup%

\caption{If $D_1$ and $D_2$ are on the same $2$-face and are connected by an edge, then only an odd number of the $D_1, D_2, D_3, D_4$ can be in $L$.}
 \label{local_conf_1}
\end{center}
\end{figure}

{\bf Case 1.1.} Let $D_3$ and $D_4$ be the other vertices of the two simplices which contain the edge from $D_1$ to $D_2$ as in Figure \ref{local_conf_1}. Then, using Proposition \ref{trip_inters}, we have
\[ D_1^2 D_2 = 1 \quad \text{and} \quad LD_1D_2=  \sum_{j=1}^{4} \epsilon_{D_j}. \] 
Therefore we have $D_1 ^2 D_2 = LD_1 D_2$ if and only if only an odd number of the vertices $D_1, D_2, D_3, D_4$ belong to $L$. 

\medskip 

{\bf Case 1.2.}  It is easy to see that in this case, both $D_1^2D_2$ and $LD_1D_2$ are always zero. 

\medskip

{\bf Case 1.3.} If $D_1 = D_2$ and it is in the interior of a $2$-faces of $\partial P$, then $D_1^2 D_2 = D_1^3 = 0$. On the other hand
\[ LD_1^2 =  \sum_{j=3}^{8} \epsilon_{D_j}, \]
where $D_3, \ldots, D_8$ are the six vertices adjacent to $D_1$. 

Therefore in this case, $D_1 ^2 D_2 = LD_1 D_2$ if and only if an even number of the $D_3, \ldots, D_8$ belong to $L$ (see Figure \ref{local_conf_2}). 

\begin{figure}[!ht] 
\begin{center}
\begingroup%
  \makeatletter%
  \providecommand\color[2][]{%
    \errmessage{(Inkscape) Color is used for the text in Inkscape, but the package 'color.sty' is not loaded}%
    \renewcommand\color[2][]{}%
  }%
  \providecommand\transparent[1]{%
    \errmessage{(Inkscape) Transparency is used (non-zero) for the text in Inkscape, but the package 'transparent.sty' is not loaded}%
    \renewcommand\transparent[1]{}%
  }%
  \providecommand\rotatebox[2]{#2}%
  \newcommand*\fsize{\dimexpr\f@size pt\relax}%
  \newcommand*\lineheight[1]{\fontsize{\fsize}{#1\fsize}\selectfont}%
  \ifx\svgwidth\undefined%
    \setlength{\unitlength}{124.15259053bp}%
    \ifx\svgscale\undefined%
      \relax%
    \else%
      \setlength{\unitlength}{\unitlength * \real{\svgscale}}%
    \fi%
  \else%
    \setlength{\unitlength}{\svgwidth}%
  \fi%
  \global\let\svgwidth\undefined%
  \global\let\svgscale\undefined%
  \makeatother%
  \begin{picture}(1,0.67420445)%
    \lineheight{1}%
    \setlength\tabcolsep{0pt}%
    \put(0,0){\includegraphics[width=\unitlength,page=1]{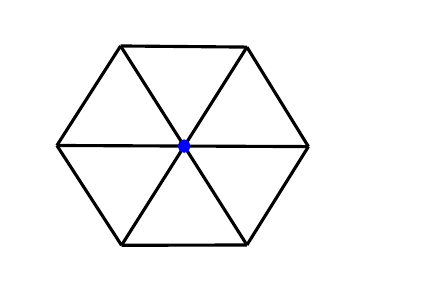}}%
    \put(0.36919448,0.42610204){\color[rgb]{0,0,0}\makebox(0,0)[lt]{\lineheight{1.25}\smash{\begin{tabular}[t]{l}$D_1$\end{tabular}}}}%
    \put(0.57362101,0.61222174){\color[rgb]{0,0,0}\makebox(0,0)[lt]{\lineheight{1.25}\smash{\begin{tabular}[t]{l}$D_3$\end{tabular}}}}%
    \put(0.23494416,0.61222174){\color[rgb]{0,0,0}\makebox(0,0)[lt]{\lineheight{1.25}\smash{\begin{tabular}[t]{l}$D_4$\end{tabular}}}}%
    \put(-0.00609602,0.31626091){\color[rgb]{0,0,0}\makebox(0,0)[lt]{\lineheight{1.25}\smash{\begin{tabular}[t]{l}$D_5$\end{tabular}}}}%
    \put(0.18612587,0.01724898){\color[rgb]{0,0,0}\makebox(0,0)[lt]{\lineheight{1.25}\smash{\begin{tabular}[t]{l}$D_6$\end{tabular}}}}%
    \put(0.54921191,0.0141978){\color[rgb]{0,0,0}\makebox(0,0)[lt]{\lineheight{1.25}\smash{\begin{tabular}[t]{l}$D_7$\end{tabular}}}}%
    \put(0.73838271,0.31320973){\color[rgb]{0,0,0}\makebox(0,0)[lt]{\lineheight{1.25}\smash{\begin{tabular}[t]{l}$D_8$\end{tabular}}}}%
  \end{picture}%
\endgroup%

\caption{If $D_1=D_2$ is in the interior of a $2$-face then an even number of the $D_3, \ldots, D_8$ can be in $L$.}
\label{local_conf_2}
\end{center}
\end{figure}
\medskip

{\bf Case 2.1.} If $D_1 = D_2$ and it lies on an edge of $\partial P$, then $D_1^2D_2 = D_1^3 = 1$. Let $S_{D_1}$ be the subset of $S$ consisting of $D_1$ and of all the vertices which are connected to $D_1$ via an edge of the graph in Figure \ref{trip_inters} for some $2$-face containing $D_1$. For example, if $D_1$ is of type $V_j$,  i.e. it is a vertex of $\partial P$, then $S_{D_1}$ contains $5$ elements, otherwise if $D_1$ is in the interior of an edge, $S_{D_1}$ will contain $8$ elements. Then we have
\[ LD_1D_2 = L {D_1^2} = \sum_{D \in S_{D_1}} \epsilon_{D}. \]

Therefore in this case $D_1 ^2 D_2 = LD_1 D_2$ if and only if $S_{D_1} \cap L$ contains an odd number of elements. 

\medskip

{\bf Case 2.2.}  Let $D_1 \neq D_2$ belong to some edge of $\partial P$ and assume they are connected by an edge in the graph of Figure \ref{trip_inters} (e.g. $D_1 = E^2_{ij}$ and $D_2 = E^3_{ij}$). Let $S_{D_1D_2}$ be the subset of $S$ consisting of $D_1$, $D_2$ and the vertices $D$ such that $D_1, D_2$ and $D$ are vertices of a $2$-simplex of the subdivision. In particular $S_{D_1D_2}$ contains $5$ elements. Then we have $D_1^2 D_2 = 1$ and 
\[ LD_1 D_2 = \sum_{D \in S_{D_1D_2}} \epsilon_{D}. \]
Therefore in this case $D_1 ^2 D_2 = LD_1 D_2$ if and only if $S_{D_1D_2} \cap L$ contains an odd number of elements. 
\medskip

{\bf Case 2.3.} Let $D_1 \neq D_2$ belong to some edge of $\partial P$ and assume they are not connected by an edge in the graph of Figure \ref{trip_inters}. We have two possibilities: either $D_1$ and $D_2$ are not adjacent (e.g. $D_1 = E^2_{ij}$ and $D_2 = E^4_{ij}$) or they are adjacent but the edge between them is not part of the graph in Figure \ref{trip_inters} (e.g. e.g. $D_1 = E^1_{ij}$ and $D_2 = E^2_{ij}$). In both cases we have $D_1^2D_2 = 0$. Let $S_{D_1D_2}$ be the subset of $S$ consisting of the vertices $D$ such that $D_1, D_2$ and $D$ are vertices of a $2$-simplex of the subdivision. Obviously $S_{D_1D_2}$ is empty in the first case and consists of $3$ elements in the second case. Then 
\[ LD_1 D_2 = \sum_{D \in S_{D_1D_2}} \epsilon_{D}. \] 
Therefore in this case $D_1 ^2 D_2 = LD_1 D_2$ if and only if $S_{D_1D_2} \cap L$ contains an even number of elements. 

\subsection{Proof of Theorem \ref{quasi_max} }
In Figure \ref{face_config} we give two examples of a $2$-face and divisor $L$ (the dotted ``red'' vertices) such that all interior edges satisfy the configurations of Case 1.1 and all interior vertices satisfy the configurations of Case 1.3. 
\begin{figure}[!ht] 
\begin{center}
\includegraphics{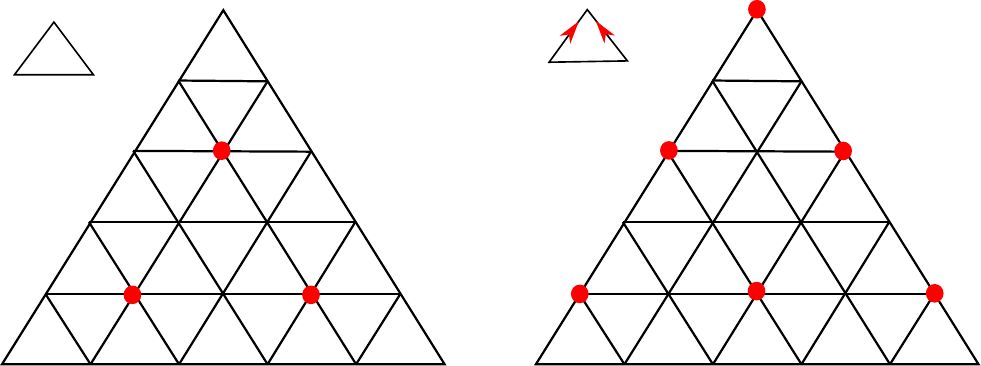}
\caption{The ``empty'' face and the ``arrow''}
\label{face_config}
\end{center}
\end{figure}
We call the picture on the left an ``empty face'' (because its edges are empty) and the one on the right  the ``arrow'' and we also depict their symbols. The arrows, in the symbol for the arrow, correspond to non-empty edges and they point towards the vertex which lies in $L$. 

In Figure \ref{edge_config} we give two examples of a neighborhood of an edge of $\partial P$ and a divisor $L$ such that every vertex of the edge satisfies the configurations of Case 2.1 and every edge satisfies the configurations of either Case 2.2 or Case 2.3. Since each edge of $\partial P$ is contained in three $2$-faces, we have glued together three copies (blue, black and red) of the graph in Figure \ref{trip_inters} along an edge. 
Notice that the configuration on the left can be obtained by gluing three copies of an ``arrow'' along a non-empty edge. The configuration on right can be obtained by gluing two copies of an ``empty'' face and one copy of an ``arrow'', along their empty edges. 
\begin{figure}[!ht] 
\begin{center}
\includegraphics{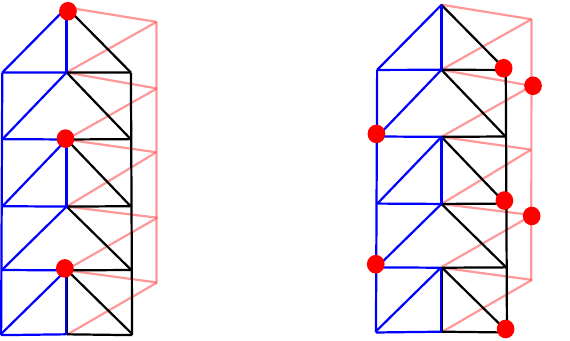}
\caption{Two configurations along an edge}
\label{edge_config}
\end{center}
\end{figure}

Figure \ref{total_config} describes a global example of a divisor $L \in H^2(\check X, \Z_2)$ satisfying $D^2 + LD = 0$ for all $D \in H^2(\check X, \Z_2)$. We have depicted the graph formed by the edges of $\partial P$. Each triple of vertices corresponds to a $2$-face of $\partial P$ and this $2$-faces is either an ``empty'' face or an ``arrow'' depending on whether the decoration of the edges matches the corresponding symbols. 
\begin{figure}[!ht] 
\begin{center}
\includegraphics{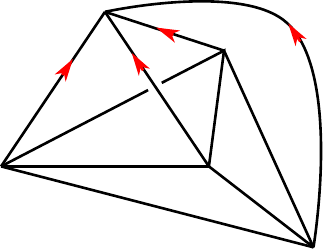}
\caption{A global configuration with empty faces and arrows}
\label{total_config}
\end{center}
\end{figure}
 It is clear that all non-empty edges will satisfy the configuration on the left in Figure \ref{edge_config} and all empty edges will satisfy the configuration on the right. 

\subsection{The twisted real mirror quintic.} Let $\check X$ be the mirror of the quintic. We study the topology of a twisted real mirror quintic in $\check X$, which we denote by $\check \Sigma_{\tau}$. In this case 
\[ H^1(B, R^{1}\check f_{\ast} \Z_2) \cong (\Z_2)^{101} \quad \text{and} \quad  H^1(B, R^{2}\check f_{\ast} \Z_2) \cong \Z_2. \]
In the untwisted case, $\check \Sigma$ has two connected components and it follows from the result of Arguz and Prince that $b_1(\check \Sigma_{0}) = 101$ (see Example 4.10 of \cite{arg_princ:realCY}). Since $H^1(B, R^{1} f_{\ast} \Z_2) \cong \Z_2$, there is only one twisted real mirror quintic $\check \Sigma_{\tau}$.  It follows from Lemma \ref{cohL2} and \ref{cohL1} that $H^1(B, \mathcal L^2_{\tau}) =0$ and  $H^1(B, \mathcal L^1_{\tau}) =(\Z_2)^{101}$. Then, sequence \eqref{connecting2} implies
\[ b_1(\Sigma_{\tau}) = 100. \]

\bibliographystyle{plain}

\vspace{1cm}
\begin{flushleft}
Diego MATESSI \\
Dipartimento di Matematica \\
Universit\`a degli Studi di Milano \\
Via Saldini 50 \\
I-20133 Milano, Italy \\
E-mail address: \email{diego.matessi@unimi.it}
\end{flushleft}

\end{document}